\newtheorem{theorem}{Theorem}[section]
\newtheorem{lemma}[theorem]{Lemma}
\newtheorem{cor}[theorem]{Corollary}
\newtheorem{prop}[theorem]{Proposition}
\theoremstyle{definition}
\newtheorem{definition}[theorem]{Definition}
\newtheorem{remark}[theorem]{Remark}
\numberwithin{equation}{section}
\begin{document}

\title[Removable Singularities for constant curvature K\"{a}hler Metrics]{Locally Removable Singularities for K\"{a}hler Metrics with Constant Holomorphic Sectional Curvature}


\author{Si-en Gong}
\address{Wu Wen-Tsun Key Laboratory of Math, USTC, Chinese Academy of Sciences. \newline \indent School of Mathematical Sciences, University of Science and Technology of China,\newline \indent Hefei, 230026, China}
\curraddr{Department of Mathematics, University of Kansas, \newline \indent Lawrence, KS 66045, USA}
\email{gse@mail.ustc.edu.cn}
\thanks{}

\author{Hongyi Liu}
\address{Department of Mathematics, University of California, Berkeley,\newline \indent Berkeley,  CA 94720, USA}
\curraddr{}
\email{hongyi@berkeley.edu}
\thanks{}

\author{Bin Xu}
\address{Wu Wen-Tsun Key Laboratory of Math, USTC, Chinese Academy of Sciences. \newline \indent School of Mathematical Sciences, University of Science and Technology of China,\newline \indent Hefei, 230026, China}
\email{bxu@ustc.edu.cn}

\subjclass[2010]{53B35, 32A10}

\date{}

\dedicatory{}

\commby{}

\begin{abstract}
Let $n\ge 2$ be an integer, and $B^{n}\subset \mathbb{C}^{n}$ the unit ball. Let $K\subset B^{n}$ be a compact subset such that $B^n\setminus K$ is connected, or $K=\{z=(z_1,\cdots, z_n)|z_1=z_2=0\}\subset \mathbb{C}^{n}$. By the theory of developing maps, we prove that a K\"{a}hler metric on $B^{n}\setminus K$ with constant holomorphic sectional curvature uniquely extends to $B^{n}$.
\end{abstract}

\maketitle


\section{Introduction}\label{intro}
Recently, there is a growing interest in the research of metrics with singularities. Donaldson\cite{Donaldson}, Li and Sun\cite{LiSun} investigated K\"{a}hler-Einstein metrics with cone singularities along a divisor on a K\"{a}hler manifold. There is also a lot of research on metrics with singularities on Riemann surfaces. For example, Nitsch\cite{nitsche1957isolierten}, Heins\cite{Hein} and Yamada\cite{yamada1988bounded} proved that an isolated singularity of a hyperbolic metric is either a conical singularity or a cusp one, and Heins\cite{Hein}, Mcowen\cite{mcowen1988point} and Troyanov\cite{Troyanov} independently gave a necessary and sufficient condition for the existence of a unique hyperbolic metric, which has the prescribed conical or cusp singularities, on a compact Riemann surface. Among all the research on singular metrics on Riemann surfaces, developing maps, due to \cite{Bryant,umehara2000metrics,eremenko2004metrics}, prove to be a very useful tool. By considering the monodromy of developing maps in \cite{CWWX}, Chen and coauthors constructed a new class of cone spherical metrics. Later, in \cite{FengXu}, Feng, Shi and Xu gave the explicit models of hyperbolic metrics near isolated singularities through developing maps, based on the related results mentioned above.

In this paper, we try to generalize the theory of developing maps to the case when dimension is larger than one. We hope that developing maps will still be a powerful tool in higher dimension. We aim to investigate metrics of constant holomorphic sectional curvature with cone singularities along a divisor. However, it is natural to ask first what will happen if the singularities are along a subvariety whose codimension is not one. We found that in this case the singularities are actually removable locally. Without loss of generality, we can consider the unit ball $B^{n}$ in $\mathbb{C}^{n}$ for convenience. In the hyperbolic and flat cases, we used the technique  similar to that in \cite{seshadri2006class}, which proves a simply connected complete K\"ahler manifold whose holomorphic sectional curvature is constant negative outside a compact set is biholomorphic to the unit ball. In the elliptic case, we applied an extension theorem in \cite{ivashkovich1984extension} to the developing map. In conclusion, by showing the existence of the developing map and extending it to the whole manifold, we get:
\begin{theorem}\label{Kcpt}
Let $n\ge 2$ be an integer, $B^{n}:=\{(z_1,\cdots, z_n)\in \mathbb{C}^{n}| \sum_{i=1}^{n}|z_i|^2<1\}$. Let  $K\subset B^{n}$ be a compact subset such that $B^n\setminus K$ is connected. $M=B^{n}\setminus K$ is endowed with a K\"{a}hler metric $g$ with constant holomorphic sectional curvature. Then $g$ uniquely extends to a K\"{a}hler metric with constant holomorphic sectional curvature on $B^{n}$.
\end{theorem}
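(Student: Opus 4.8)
The plan is to follow the roadmap announced in the introduction: attach to $g$ a developing map into the appropriate complex space form, extend it across $K$, and pull back the model metric. Recall the classical local classification (Bochner, Hawley, Igusa; see also Calabi): a K\"ahler metric of constant holomorphic sectional curvature $c$ on a complex manifold of dimension $n$ is locally holomorphically isometric to the model space form $X_c$ of the same dimension and curvature --- $\mathbb{CP}^n$ with a Fubini--Study metric if $c>0$, $\mathbb{C}^n$ with the flat metric if $c=0$, $B^n$ with (a multiple of) the Bergman metric if $c<0$ --- the local isometry being unique up to $\mathrm{Aut}(X_c)$. First I would verify that these local isometries can be analytically continued along every path in $M=B^n\setminus K$, producing a holomorphic local biholomorphism $\mathrm{dev}\colon\widetilde M\to X_c$ from the universal cover, equivariant under a holonomy homomorphism $\rho\colon\pi_1(M)\to\mathrm{Aut}(X_c)$ and with $\pi^{*}g=\mathrm{dev}^{*}g_{X_c}$ for $\pi\colon\widetilde M\to M$; equivalently, $\mathrm{dev}$ is a possibly multivalued holomorphic map on $M$ with that monodromy.

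For $c\le 0$ I would argue as in \cite{seshadri2006class}. When $c<0$ the target $B^n$ is bounded, so every branch of $\mathrm{dev}$ is bounded; together with the hypothesis that $B^n\setminus K$ is connected (and $n\ge 2$) this forces the holonomy $\rho$ to be trivial, so $\mathrm{dev}$ descends to a single-valued holomorphic map $M\to B^n\subset\mathbb{C}^n$, and by Hartogs' Kugelsatz it extends to a holomorphic map $\widehat D\colon B^n\to\mathbb{C}^n$. A maximum-principle argument --- using that $\sum_j|\widehat D_j|^2$ is plurisubharmonic and that the spheres $\partial B^n_r$ for $r$ close to $1$ avoid the compact set $K$ --- gives $\widehat D(B^n)\subset\overline{B^n}$, and then $\widehat D(B^n)\subset B^n$ (otherwise $\widehat D$ would be constant). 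The case $c=0$ is the same with $\mathbb{C}^n$ in place of $B^n$ as target; there Hartogs' Kugelsatz already needs no boundedness. For $c>0$ the target $\mathbb{CP}^n$ is compact, and here I would instead apply the Hartogs-type extension theorem for maps into compact K\"ahler manifolds of \cite{ivashkovich1984extension} to $\mathrm{dev}$, extending it across $K$ to a holomorphic map $\widehat D\colon B^n\to\mathbb{CP}^n$.

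In all three cases it then remains to see that the extended $\widehat D$ is a local biholomorphism on all of $B^n$. The zero locus of $\det(d\widehat D)$ is an analytic subset of $B^n$, and it is contained in $K$ because $\widehat D=\mathrm{dev}$ is an immersion on $M$; being a closed analytic subset of $B^n$ contained in the compact set $K$, it must be empty, since $B^n$ contains no compact positive-dimensional analytic subset. Hence $\widehat D$ is everywhere a holomorphic local biholomorphism, $\widehat g:=\widehat D^{*}g_{X_c}$ is a K\"ahler metric of constant holomorphic sectional curvature $c$ on $B^n$, and $\widehat g$ restricts to $g$ on $B^n\setminus K$, which proves existence of the extension. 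Uniqueness is immediate: two such extensions are real-analytic K\"ahler metrics on the connected manifold $B^n$ agreeing on the nonempty open set $B^n\setminus K$, hence equal.

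I expect the main obstacle to be the passage from the a priori multivalued $\mathrm{dev}$ to a single-valued object on which Hartogs' Kugelsatz (or the theorem of \cite{ivashkovich1984extension}) can act --- that is, controlling the holonomy of $g$ around loops that are nontrivial in $\pi_1(B^n\setminus K)$, which can genuinely happen (for instance when $K$ is a knotted $2$-sphere in $B^4$). This is precisely where the hypothesis that $B^n\setminus K$ be connected, and the techniques of \cite{seshadri2006class} for $c\le 0$ and of \cite{ivashkovich1984extension} for $c>0$, carry the weight; once this is settled, the extension of $\widehat D$ across $K$ and the nonvanishing of $\det(d\widehat D)$ are comparatively routine.
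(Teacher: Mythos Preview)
Your overall architecture matches the paper's, and your endgame---Hartogs/Ivashkovich extension of the developing map, the plurisubharmonic maximum-principle bound on the image in the hyperbolic case, the compactness argument killing the zero set of $\det(d\widehat D)$, and uniqueness by real-analyticity---is correct and essentially what the paper does. The gap is exactly the one you yourself flag at the end: you never actually show that the holonomy is trivial, i.e.\ that $\mathrm{dev}$ is single-valued on $B^n\setminus K$. Your sentence ``together with the hypothesis that $B^n\setminus K$ is connected (and $n\ge 2$) this forces the holonomy $\rho$ to be trivial'' is not an argument; $\pi_1(B^n\setminus K)$ can be highly nontrivial, and boundedness of each branch does not by itself force $\rho(\gamma)=\mathrm{id}$ for $\gamma\in\pi_1$. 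Both Hartogs' Kugelsatz and the theorem of \cite{ivashkovich1984extension} take as input a single-valued holomorphic map on a domain, so until this is settled you cannot invoke either of them on $B^n\setminus K$.

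The paper sidesteps the monodromy issue entirely, and the trick is worth internalizing. Since $K$ is compact, choose $r\in(0,1)$ with $K\subset B^n_r$ and work on the spherical shell $S:=B^n\setminus\overline{B^n_r}$. Because $n\ge 2$ (so the real dimension is $\ge 4$), $S$ is simply connected, and the developing map of $g|_S$ is automatically single-valued on $S$. Now apply Hartogs (for $c\le 0$) or \cite{ivashkovich1984extension} (for $c>0$) to this single-valued $f_0\colon S\to N_c$ to get $F\colon B^n\to N_c$; the rest of your argument (image in the model, nondegeneracy of $dF$) then runs without change. The resulting $\widetilde g:=F^*g_{N_c}$ a priori extends only $g|_S$, but a uniqueness argument (your real-analyticity remark, or the paper's Lemma on uniqueness of extensions) upgrades this to $\widetilde g=g$ on all of $B^n\setminus K$. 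In short: don't try to kill the holonomy on $B^n\setminus K$; shrink to a simply connected shell where there is none, extend from there, and recover the full $B^n\setminus K$ by uniqueness.
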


\begin{theorem}\label{z1z2=0}
Let $n\ge 2$ be an integer, $B^{n}:=\{(z_1,\cdots, z_n)\in \mathbb{C}^{n}| \sum_{i=1}^{n}|z_i|^2<1\}$. $M:=B^{n}\setminus \{(z_1,\cdots,z_n)|z_1=z_2=0\}$ is endowed with a K\"{a}hler metric, $g$, with constant holomorphic sectional curvature. Then $g$ uniquely extends to a K\"{a}hler metric with constant holomorphic sectional curvature on $B^{n}$.
\end{theorem}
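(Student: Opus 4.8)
The plan is to uniformize $(M,g)$ by a developing map and then to extend that map across $K$. Write $c$ for the constant holomorphic sectional curvature of $g$ and let $(X,g_X)$ be the complete simply connected K\"ahler space form of that curvature: $X=\mathbb{C}^{n}$ with the flat metric when $c=0$, $X=B^{n}$ with the complex hyperbolic metric when $c<0$, and $X=\mathbb{CP}^{n}$ with a positive multiple of the Fubini--Study metric when $c>0$. In each case $g_X$ is real-analytic and any K\"ahler metric of constant holomorphic sectional curvature $c$ is, near each point, holomorphically isometric to $(X,g_X)$; this equips $M$ with an atlas of holomorphic charts into $X$ whose transition maps are restrictions of global holomorphic isometries of $X$. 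Since $K=\{z_{1}=z_{2}=0\}\cap B^{n}$ has real codimension $4$, the set $M=B^{n}\setminus K$ is simply connected, so this $(G,X)$-structure has trivial monodromy and yields a globally defined holomorphic local isometry $\mathrm{dev}\colon M\to X$, i.e.\ a local biholomorphism with $\mathrm{dev}^{*}g_X=g$.

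The second step is to extend $\mathrm{dev}$ holomorphically across $K$. If $c\le 0$ the target is $\mathbb{C}^{n}$ or the bounded domain $B^{n}\subset\mathbb{C}^{n}$, so $\mathrm{dev}=(f_{1},\dots,f_{n})$ with each $f_{j}$ holomorphic on $B^{n}\setminus K$, and bounded when $c<0$; since $K$ is an analytic subset of codimension $2$, each $f_{j}$ extends holomorphically to $B^{n}$, giving a holomorphic map $F\colon B^{n}\to\mathbb{C}^{n}$ with $F|_{M}=\mathrm{dev}$. When $c<0$ one must further check $F(B^{n})\subset B^{n}$: the plurisubharmonic function $|F|^{2}$ is $<1$ on the dense subset $M$, hence $\le 1$ on $B^{n}$ by continuity, and if it attained the value $1$ at an interior point the maximum principle would force $|F|^{2}\equiv 1$, contradicting $|F|^{2}<1$ on $M$. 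If $c>0$ the target $\mathbb{CP}^{n}$ is compact and this coordinatewise argument is unavailable; instead one applies the extension theorem of Ivashkovich \cite{ivashkovich1984extension} to the locally biholomorphic map $\mathrm{dev}\colon B^{n}\setminus K\to\mathbb{CP}^{n}$ and the codimension $\ge 2$ analytic set $K$, obtaining a holomorphic extension $F\colon B^{n}\to\mathbb{CP}^{n}$ with $F|_{M}=\mathrm{dev}$.

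Finally, in all three cases $F\colon B^{n}\to X$ is a holomorphic map of equidimensional complex manifolds that is a local biholomorphism on $M$, so its critical locus --- the zero set of the Jacobian determinant read in any local holomorphic chart on $X$ --- is empty or of pure codimension $1$; being contained in $K$, which has codimension $2$, it is empty, and $F$ is a local biholomorphism on all of $B^{n}$. Hence $\widehat g:=F^{*}g_X$ is a K\"ahler metric on $B^{n}$ of constant holomorphic sectional curvature $c$ with $\widehat g|_{M}=g$, which is the asserted extension; uniqueness is immediate, since any such extension agrees with $\widehat g$ on the dense set $M$ and therefore on $B^{n}$. I expect the elliptic case $c>0$ to be the main obstacle: with $\mathbb{CP}^{n}$ compact the developing map has no ``bounded coordinates'', so the elementary Hartogs argument breaks down and one needs both the more delicate theorem of Ivashkovich and, crucially, the fact that the extended map is genuinely holomorphic with no indeterminacy points, which is precisely where the codimension of $K$ being $\ge 2$ is used. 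A secondary technical point is the construction of $\mathrm{dev}$ itself, resting on the real-analyticity of K\"ahler metrics of constant holomorphic sectional curvature and the resulting local holomorphic rigidity against the model.
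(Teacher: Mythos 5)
Your proposal is correct and follows essentially the same route as the paper: build the single-valued developing map on the simply connected complement of $\{z_1=z_2=0\}$, extend it by the codimension-two Hartogs theorem (or Ivashkovich's theorem when $c>0$), and show the extension stays nondegenerate because the zero divisor of the Jacobian determinant cannot be contained in a codimension-two set --- which is exactly the content of the paper's Lemma~\ref{z1z2zero} via the Weierstrass preparation theorem. Your minor variants (the plurisubharmonic maximum principle for the image containment when $c<0$, and uniqueness by density and continuity rather than by the developing-map Lemma~\ref{uniextend}) are both valid here since $M$ is dense in $B^{n}$.
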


\begin{cor}\label{codim2}
Let $N$ be an analytic subvariety of codimension larger than $1$. Then a K\"{a}hler metric with constant holomorphic sectional curvature on $M\setminus N$ extends to $M$. In particular, if we let $N$ be a single point, then $N$ is a removable singularity of the K\"{a}hler metric.
\end{cor}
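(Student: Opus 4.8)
The plan is to reduce Corollary~\ref{codim2} to Theorems~\ref{Kcpt} and~\ref{z1z2=0} by a local argument together with an induction on $\dim N$. Extending a K\"ahler metric across $N$ is a purely local matter: if $g$ extends to a K\"ahler metric with constant holomorphic sectional curvature on a neighborhood of each point of $N$, then these local extensions agree with $g$ on the dense open set $M\setminus N$, hence with one another, and so glue to a global tensor on $M$; this tensor is a K\"ahler metric (positivity and closedness pass to the extension, the latter by density of $M\setminus N$) whose holomorphic sectional curvature is continuous and equals the relevant constant on a dense set, hence is constant, and uniqueness follows the same way. Assuming $M$ connected and $n:=\dim M\ge 2$ (the case $n=1$ being vacuous), it therefore suffices to show, in a coordinate ball $B^{n}\subset\mathbb{C}^{n}$, that $g$ extends across $N\cap B^{n}$, and this I prove by induction on $d=\dim N$, with the inductive statement: for every complex manifold $M'$ and every analytic $N'\subset M'$ of codimension $\ge 2$ with $\dim N'\le d$, a K\"ahler metric with constant holomorphic sectional curvature on $M'\setminus N'$ extends to $M'$.

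For the base case $d=0$ the set $N'$ is discrete; around an isolated point $p\in N'$ I pick a coordinate ball $V\cong B^{n}$ with $V\cap N'=\{p\}\cong\{0\}$ and apply Theorem~\ref{Kcpt} with $K=\{0\}$ (compact, with connected complement since $n\ge 2$) to extend $g$ across $p$. For the inductive step, write $N'=N'_{\mathrm{reg}}\sqcup N'_{\mathrm{sing}}$, where $N'_{\mathrm{sing}}$ is analytic in $M'$, of codimension $\ge 2$, and of dimension $<d$. Near a point $q\in N'_{\mathrm{reg}}$ I choose a submanifold chart, disjoint from $N'_{\mathrm{sing}}$, identifying a neighborhood of $q$ with $B^{n}$, $q\mapsto 0$, and carrying $N'$ onto the full linear slice $\{z_{1}=\cdots=z_{k}=0\}\cap B^{n}$, where $k\ge 2$ is the local codimension of $N'$ at $q$. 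Since $\{z_{1}=\cdots=z_{k}=0\}\subseteq\{z_{1}=z_{2}=0\}$, the metric $g$ is a smooth K\"ahler metric with constant holomorphic sectional curvature on $B^{n}\setminus\{z_{1}=z_{2}=0\}$, so Theorem~\ref{z1z2=0} extends it to $B^{n}$; in particular $g$ extends across $N'$ near $q$. Gluing these extensions over all of $N'_{\mathrm{reg}}$ (as in the first paragraph) produces a K\"ahler metric with constant holomorphic sectional curvature on $M'\setminus N'_{\mathrm{sing}}$, to which the induction hypothesis applies and yields the extension to $M'$. Taking $N$ to be a single point recovers the final sentence of the corollary.

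The substantive analytic work is entirely contained in Theorems~\ref{Kcpt} and~\ref{z1z2=0}; what remains for the corollary is bookkeeping, and the only step needing attention is the descent to the singular locus of $N$. At each stage one must check that the partially extended metric is a genuine smooth K\"ahler metric with constant holomorphic sectional curvature on the complement of a \emph{full} linear codimension-two subspace of $B^{n}$, which is exactly the hypothesis of Theorem~\ref{z1z2=0}; this is why one first straightens $N$ at its smooth points before invoking that theorem. It is also why the hypothesis $\mathrm{codim}\,N>1$ cannot be weakened to $\ge 1$: a codimension-one component of $N$ may carry a genuinely non-removable singularity, such as a cone singularity along a divisor, as recalled in the introduction.
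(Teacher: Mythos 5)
The paper states Corollary \ref{codim2} without its own proof, as an immediate consequence of Theorems \ref{Kcpt} and \ref{z1z2=0}; your argument supplies exactly the intended deduction --- localize, straighten $N$ at its regular points to reduce to Theorem \ref{z1z2=0}, glue by density and continuity, and descend to the singular stratum by induction on $\dim N$, with Theorem \ref{Kcpt} handling the discrete base case --- and it is correct. The one step worth keeping explicit is the one you already flag: after straightening, $g$ is first restricted to the complement of the full slice $\{z_1=z_2=0\}$ before invoking Theorem \ref{z1z2=0}, and the resulting extension is then seen to agree with $g$ on all of $B^{n}\setminus N$ by continuity on the dense set where both are defined.
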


\begin{remark}\label{Riemisosing}
The similar statements do not hold in the real case. It turns out that a Riemannian metric $g$ of constant sectional curvature on ${\mathfrak{B}}^n\setminus \{0\}(n>2)$ can have an isolated singularity, where ${\mathfrak{B}}^n:=\{x\in \mathbf{R}^n: |x|<1\}$ is a real unit ball here. We will discuss it in detail in Section \ref{pro}.
\end{remark}

\begin{remark}\label{KEnonRemovable}
We want to mention an example of {\it a K\"{a}hler-Einstein metric on $B^2\setminus \{0\}$ that is singular at $0$ and does not extend to $B^2$}. To construct such an example, we begin with the Del Pezzo surface $X$  which is obatained by blowing up three points, $p_1, p_2, p_3\in \mathbf{C}P^2$, in general position. We denote by $E$ the exception divisor in $X$. Then $X\backslash E$ is isomorphic to $\mathbf{C}P^{2}\setminus \{p_1,p_2,p_3\}$ as quasi-projective varieties. Yum-Tong Siu \cite{siu1988existence} proves that there exists a smooth K\"{a}hler-Einstein metric $g$ on $X$.  We could look at the restriction of $g$ to $X\backslash E$ as a K\"{a}hler-Einstein metric  on $\mathbf{C}P^{2}\setminus \{p_1,p_2,p_3\}$, denoted by $g_0$. Finally we claim that $g_0$ does not extend to $\mathbf{C}P^2$, i.e. there exists $p_j$ with $1\leq j\leq 3$ such that
the restriction of $g_0$ to a punctured ball $B^2\backslash\{p_j\}$ centered at $p_j$ does not extend to $B^2$. Otherwise, by the uniqueness of the K\"{a}hler-Einstein metric on $\mathbf{C}P^2$, $g_0$ must be the Fubini-Study metric up to a scaling, hence have constant holomorphic sectional curvature. It follows that $g_0$ is of constant holomorphic sectional curvature.
By the smoothness of the original metric $g$ on $X$, $g$ also has constant holomorphic sectional curvature. By the compactness and simply connected property of $X$, $X$ should be the space form,  which is a contradiction.
\end{remark}

We explain the organization of this paper. In Section \ref{preliminaries}, we introduce some preliminaries. Most of the discussion in  Section \ref{preliminaries} is devoted to the existence of the developing map on a K\"{a}hler manifold with constant holomorphic sectional curvature. In the rest of Section \ref{preliminaries}, we state the classical Hartogs extension theorem and Weierstrass preparation theorem, which are used in the proof of the main result. In Section \ref{mainres}, using the developing map, we prove Theorem \ref{Kcpt} and \ref{z1z2=0}. Finally, in Section \ref{pro}, we discuss the possible further research on this topic.

\section{Preliminaries}\label{preliminaries}

\subsection{Cartan-Ambrose-Hicks Theorem}\label{CAHThm}

Cartan-Ambrose-Hicks Theorem is a classical theorem in Riemannian Geometry (\cite{Cheeger}). In this section, we recall its generalization on K\"{a}hler manifolds (\cite{FYZ}). This will be the start point of the developing map. For the convenience of readers, we provide some details for the proof of the theorems.

Let $(M,g)$ and $(M^{'},g^{'})$ be two $n$-dimensional Riemannian manifolds . Fix points $p\in M$, $p^{'}\in M^{'}$. Let $I:T_{p}M\to T_{p^{'}}M^{'}$ be an isometry. Take $r>0$ sufficiently small such that $B_{r}(p)$ and $B_{r}(p^{'})$ can be endowed with geodesic normal coordinates. Define the diffeomorphism
$\varphi=\exp_{p^{'}}\circ I\circ \exp_{p}^{-1}:B_{r}(p)\to B_{r}(p^{'})$.
Let $P_{\gamma}$ denote the parallel transport along a curve $\gamma$ on a Riemannian manifold, and $R$, $R^{'}$ the curvature tensors of $M$ and $M^{'}$, respectively.

For a geodesic $\gamma:[0,1] \to B_{r}(p)$ with $\gamma(0)=p$, write $\gamma^{'}:=\varphi(\gamma)$, and set $I_{\gamma}:=P_{\gamma^{'}}\circ I\circ P_{-\gamma}$. The following figure illustrates the definitions of $I_{\gamma}$ and $\varphi$:

\begin{tikzpicture}
\draw [line width=1pt] plot[smooth] coordinates{(2,-2) (2.5,0) (3.0, 1) (3.5, 1.5) (4,1.8)};
\draw [line width=1pt] plot[smooth] coordinates{(6,-2) (6.5,-0.2) (7,1) (7.5, 1.55) (8,1.9)};

\fill (2,-2) circle (1.5pt);
\node[below] at (2,-2) {$p$};
\node[right] at (2.7,0.5) {$\gamma$};
\fill (6,-2) circle (1.5pt);
\node[right] at (6,-2) {$p^{'}:=\varphi(p)$};
\node[right] at (6.5,0) {$\gamma^{'}:=\varphi(\gamma)$};
\fill (4,1.8) circle (1.5pt);
\node[below] at (4,1.8) {$\gamma(1)$};
\node[below] at (4.3,1.5) {$=\exp_{p}(w)$};
\fill (8,1.9) circle (1.5pt);
\node[right] at (8,1.9) {$\gamma^{'}(1)=\exp_{p^{'}}(w^{'})$};
\draw [->] (4,1.8)--(3,1.8) node[left]{$v\in T_{\gamma(1)}M$};
\draw [->](2.5,0)--(1.5,0);
\draw [->](2,-2)--(1,-2) node[left]{$P_{-\gamma}(v)$};
\draw [->,thick] (2,1.5)--(0.5,-1.5);
\node[left] at (1.25,0){$P_{-\gamma}$};
\draw [->,thick] (2.5,-2)--(4.5,-2);
\node[above] at (3.25,-2) {$I$};
\draw [->] (6,-2)--(5,-2);
\node[below] at (5.3,-2) {$I(P_{-\gamma}(v))$};
\draw [->] (6.5,-0.2)--(5.5,-0.2);
\draw [->] (8,1.9)--(7,1.9);
\draw [->,thick] (4.55,-1.95)--(6.3,1.7);
\node[left] at (5.4,-0.125) {$P_{\gamma^{'}}$};
\draw [->,thick] (4.2,1.8)--(5.8,1.9);
\node [above] at (5, 1.7) {$I_{\gamma}$};
\node [right] at (6,2.1) {$I_{\gamma}(v)$};
\draw [->] (2,-2)--(2.15,-1) node[above]{$w$};
\draw [->] (6,-2)--(6.15,-1);
\node [left] at (6.15,-0.7) {$w^{'}$};
\node [left] at (6.0,-1.1) {\rotatebox{90}{=:}};
\node [left] at (6.2,-1.5) {$I(w)$};
\draw[ultra thick,->] (4,2.1).. controls (6,3) ..(8,2.2);
\node[below] at (6,3.3) {$\varphi$};
\end{tikzpicture}
\\
Let $\nabla$ be the Levi-Civita connection on a Riemannian manifold $(M,g)$. Recall that the curvature tensor $R(\cdot,\cdot)\cdot$ on $M$ is defined by
$$R(X,Y)Z:=-\nabla_X\nabla_YZ+\nabla_Y\nabla_XZ+\nabla_{[X,Y]}Z$$ for arbitrary $p$ on $M$ and $X,Y$ and $Z\in T_{p}M$. Then we have:

\begin{lemma}{(\citep[p.37, Lemma 1.35]{Cheeger})}\label{LocalCAH}
Let $(M,g)$ and $(M^{'},g^{'})$ be two Riemannian manifolds. Use the notations defined above. If for all geodesics $\gamma: [0,1]\to B_{r}(p)$ with $\gamma(0)=p$ and any $X, Y, Z\in T_{\gamma(1)}M$, there holds
$$I_{\gamma}(R(X,Y)Z)=R^{'}(I_{\gamma}(X),I_{\gamma}(Y))I_{\gamma}(Z),$$
then $\varphi$ is an isometry and $d\varphi_{\gamma(1)}= I_{\gamma}$.
\end{lemma}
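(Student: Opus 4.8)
The plan is to show that for every $q\in B_r(p)$ the differential $d\varphi_q$ coincides with $I_\gamma$, where $\gamma$ is the radial geodesic from $p$ to $q$. Since $I_\gamma=P_{\gamma'}\circ I\circ P_{-\gamma}$ is a composition of linear isometries, this forces $d\varphi_q$ to be an isometry at every point, so $\varphi$, being a diffeomorphism and a local isometry, is an isometry. Thus fix $q=\exp_p(w)=\gamma(1)$ with $\gamma(t)=\exp_p(tw)$, and fix $v\in T_qM$.

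First I would realize $v$ as the endpoint value of a Jacobi field. Write $v=d(\exp_p)_w(u)$ for the unique $u\in T_pM$ (using $\exp_p$ is a diffeomorphism on $B_r(p)$, so $d(\exp_p)_w$ is invertible, and the canonical identification $T_w(T_pM)\cong T_pM$). The geodesic variation $\alpha(s,t):=\exp_p\!\big(t(w+su)\big)$ has variation field $J(t):=\frac{\partial}{\partial s}\big|_{s=0}\alpha(s,t)$, a Jacobi field along $\gamma$ with $J(0)=0$, $\frac{D}{dt}J(0)=u$, and $J(1)=v$. Since $\varphi\circ\exp_p=\exp_{p'}\circ I$, the variation $\alpha$ is carried to $\alpha'(s,t)=\exp_{p'}\!\big(t(Iw+s\,Iu)\big)$, again a geodesic variation of $\gamma'(t)=\exp_{p'}(tIw)$; hence $\widetilde J(t):=\frac{\partial}{\partial s}\big|_{s=0}\alpha'(s,t)=d\varphi_{\gamma(t)}\big(J(t)\big)$ is a Jacobi field along $\gamma'$ with $\widetilde J(0)=0$, $\frac{D}{dt}\widetilde J(0)=Iu$, and $\widetilde J(1)=d\varphi_q(v)$. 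So it suffices to prove $\widetilde J(t)=A_t\big(J(t)\big)$ for all $t$, where $A_t:=P_{\gamma'|_{[0,t]}}\circ I\circ\big(P_{\gamma|_{[0,t]}}\big)^{-1}:T_{\gamma(t)}M\to T_{\gamma'(t)}M'$ (note $A_1=I_\gamma$, $A_0=I$); evaluating at $t=1$ then gives $d\varphi_q(v)=I_\gamma(v)$.

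Two observations make this work. First, since $\dot\gamma,\dot\gamma'$ are parallel along $\gamma,\gamma'$ and $\dot\gamma'(0)=Iw=I(\dot\gamma(0))$, we get $A_t(\dot\gamma(t))=\dot\gamma'(t)$. Second, applying the hypothesis to the reparametrized sub-geodesic $t\mapsto\gamma(\tau t)$ (for each fixed $\tau\in(0,1]$), which still issues from $p$ and lies in $B_r(p)$ and whose associated map is exactly $A_\tau$, yields the pointwise intertwining $A_t\big(R(X,Y)Z\big)=R'\big(A_tX,A_tY\big)A_tZ$ for all $X,Y,Z\in T_{\gamma(t)}M$. Now pick a parallel orthonormal frame $e_1,\dots,e_n$ along $\gamma$; then $e_i':=A_\bullet(e_i)$ is a parallel frame along $\gamma'$, since $A_t$ is built from parallel transport. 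Writing $J=\sum_i f_ie_i$ and $R(e_i,\dot\gamma)\dot\gamma=\sum_j a_{ij}e_j$, the Jacobi equation $\frac{D^2}{dt^2}J+R(J,\dot\gamma)\dot\gamma=0$ reads $f_j''+\sum_i a_{ij}f_i=0$. Set $\widehat J:=\sum_i f_ie_i'=A_\bullet(J)$. Using the two observations and parallelism, $R'(e_i',\dot\gamma')\dot\gamma'=A_\bullet\big(R(e_i,\dot\gamma)\dot\gamma\big)=\sum_j a_{ij}e_j'$, hence $\frac{D^2}{dt^2}\widehat J+R'(\widehat J,\dot\gamma')\dot\gamma'=\sum_j\big(f_j''+\sum_i a_{ij}f_i\big)e_j'=0$, so $\widehat J$ is a Jacobi field along $\gamma'$. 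Since $\widehat J(0)=A_0(0)=0$ and $\frac{D}{dt}\widehat J(0)=\sum_i f_i'(0)e_i'(0)=I\big(\sum_i f_i'(0)e_i(0)\big)=I(u)=\frac{D}{dt}\widetilde J(0)$, uniqueness of Jacobi fields with prescribed initial data gives $\widehat J\equiv\widetilde J$, which is the claim.

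There is no deep obstacle here; the whole content is the Jacobi-field transport argument. The step needing the most care is the second observation: the curvature identity is hypothesized only for geodesics emanating from $p$, yet it is needed at interior points $\gamma(t)$, and this is legitimate precisely because every initial segment of $\gamma$, suitably reparametrized, is again such a geodesic. Combined with the compatibility $A_t(\dot\gamma(t))=\dot\gamma'(t)$, this is exactly what allows the Jacobi equation to be pushed forward coefficient-by-coefficient in a parallel frame, and everything else is bookkeeping.
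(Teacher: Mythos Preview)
The paper does not supply its own proof of this lemma; it merely cites \cite[p.~37, Lemma 1.35]{Cheeger} and uses the statement as a black box. Your argument is correct and is precisely the classical Jacobi-field proof one finds in that reference: realize an arbitrary tangent vector at $q=\gamma(1)$ as the terminal value of a Jacobi field along the radial geodesic, push it forward by $\varphi$ to a Jacobi field along $\gamma'$, and separately verify that the parallel-transport map $A_t$ carries Jacobi fields to Jacobi fields (this is where the curvature hypothesis enters), then match initial data and invoke uniqueness. Your handling of the one subtle point---that the curvature intertwining is needed at every $\gamma(t)$, not just at $\gamma(1)$, and is obtained by applying the hypothesis to the reparametrized sub-geodesic---is exactly right.
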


This local property has its K\"{a}hler version. Before stating the proposition, we shall recall the concept of holomorphic sectional curvature. It is well known that the Riemann curvature tensor $Rm(\cdot,\cdot,\cdot,\cdot)$ is defined by
$$Rm(X,Y,Z,W):=g(R(X,Y)Z,W)$$
for arbitrary $p$ on $M$ and $X,Y,Z$ and $W\in T_{p}M$. On a K\"{a}hler mainfold $(M, J, g)$, let $\Pi_p$ be a plane of $T_{p}M$ that is invariant under $J$. It is easy to see that
$$K(\Pi_p):= Rm(X,JX,X,JX)$$
where $X\in \Pi_p$ and $|X|=1$, is independent of the choice of $X$ (\citep[Chap. IX.7]{Kobayashi}). We call $K(\Pi_p)$ the holomorphic sectional curvature by $\Pi_p$.

We will assume $M$ and $M^{'}$ are of constant holomorphic sectional curvature in the following, and set

\begin{eqnarray}
R_0(X,Y,Z,W):&=&\frac{1}{4}\bigg{(}g(X,Z)g(Y,W)-g(X,W)g(Y,Z)\nonumber\\
             & & +g(X,JZ)g(Y,JW)-g(X,JW)g(Y,JZ)\nonumber\\
             & &+2g(X,JY)g(Z,JW)\bigg{)}\nonumber
\end{eqnarray}
Then, we have

\begin{prop}(\citep[p.167 Prop. 7.3]{Kobayashi})\label{RcR0}
Let $(M,J,g)$ be a K\"{a}hler manifold of constant holomorphic sectional curvature $c$. Then $Rm=cR_0$.
\end{prop}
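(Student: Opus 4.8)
The plan is to reduce this to a purely algebraic statement about curvature tensors together with the classical polarization lemma for K\"{a}hler curvature. Set $T := Rm - cR_0$. Because $Rm$ and $cR_0$ obey exactly the same algebraic identities, so does $T$; that is, $T$ is again an algebraic curvature tensor having the algebraic symmetries of the curvature tensor of a K\"{a}hler metric. Moreover, since $g$ has constant holomorphic sectional curvature $c$ while a direct computation gives $R_0(X,JX,X,JX) = |X|^4$, the holomorphic sectional curvature associated with $T$ vanishes identically. Since an algebraic curvature tensor of K\"{a}hler type is recovered from its holomorphic sectional curvatures by polarization (see \cite{Kobayashi}), this forces $T = 0$, i.e. $Rm = cR_0$.

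In more detail, I would first check that $R_0$ has the symmetries of a K\"{a}hler curvature tensor: skew-symmetry in its first two and in its last two arguments, symmetry under interchange of the two pairs, the first Bianchi identity, and the K\"{a}hler identities $R_0(JX,JY,Z,W) = R_0(X,Y,Z,W)$ and $R_0(X,Y,JZ,JW) = R_0(X,Y,Z,W)$. Writing $R_0$ as the sum of its three displayed pieces and using only $g(JX,Y) = -g(X,JY)$ and $J^2 = -\mathrm{Id}$, the skew-symmetries, the pair-interchange symmetry and the K\"{a}hler identities are immediate, and the first Bianchi identity is the single check that uses the interaction between the piece $2\,g(X,JY)g(Z,JW)$ and the piece $g(X,JZ)g(Y,JW) - g(X,JW)g(Y,JZ)$. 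Next, because $g(X,JX) = 0$ all cross terms in $R_0(X,JX,X,JX)$ drop, leaving $\frac14\big(g(X,X)g(JX,JX) - g(X,JJX)g(JX,JX) + 2\,g(X,JJX)g(X,JJX)\big) = \frac14(|X|^4 + |X|^4 + 2|X|^4) = |X|^4$. Hence $cR_0$ has holomorphic sectional curvature $c$ along every $J$-invariant plane, exactly as $g$ does by hypothesis, and $T = Rm - cR_0$ is a curvature tensor of K\"{a}hler type with $T(X,JX,X,JX) = 0$ for every tangent vector $X$.

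It then remains to prove the algebraic fact that such a $T$ is zero. For this I would carry out the standard polarization: substitute $X + tY$ for $X$ in $0 = T\big(X+tY,\,J(X+tY),\,X+tY,\,J(X+tY)\big)$, equate to zero the coefficient of each power of $t$, and then repeatedly apply the skew-symmetries, the pair-interchange symmetry, the K\"{a}hler identity and the first Bianchi identity --- first to conclude that mixed quantities such as $T(X,JX,Y,JY)$ vanish, and then that every component of $T$ is zero. This polarization bookkeeping is the only genuinely computational ingredient, and it is the step I expect to be the main obstacle; it is the standard argument for K\"{a}hler curvature tensors, so in the paper I would either reproduce it or cite \cite{Kobayashi}, and conclude $T \equiv 0$, that is, $Rm = cR_0$. (Conversely, $cR_0$ visibly has constant holomorphic sectional curvature $c$, so the result is really a characterization, but only the stated direction is needed in what follows.)
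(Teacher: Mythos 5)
Your argument is correct and is exactly the standard one behind the result the paper simply cites from Kobayashi--Nomizu: verify that $R_0$ is an algebraic curvature tensor of K\"ahler type with $R_0(X,JX,X,JX)=|X|^4$, observe that $T=Rm-cR_0$ is then a K\"ahler-type curvature tensor with vanishing holomorphic sectional curvature, and conclude $T=0$ by polarization. The polarization step you flag as the remaining obstacle is precisely \cite[Ch.~IX, Prop.~7.1]{Kobayashi}, so citing it (as the paper in effect does) closes the proof.
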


Then we are ready to give the proposition. It is well known to experts, but we will provide a proof of it since we could not find the suitable literature.

\begin{prop}\label{LocalKahlerCaH}
Let $(M,J,g)$ and $(M^{'},J^{'},g^{'})$ be two K\"{a}hler manifolds of constant holomorphic sectional curvature $c$. Then for each $p$ on $M$, each $q$ on $M^{ '}$, and each $I:T_pM\to T_qM^{'}$ with $I\circ J=J^{'}\circ I$, there exist neighborhoods $U \ni p$, $V\ni q$ and a holomorphic isometry $\varphi: U\to V$ such that $\varphi(p)=q$ and $d\varphi_{p}=I$.
\end{prop}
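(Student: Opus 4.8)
The plan is to deduce Proposition~\ref{LocalKahlerCaH} from the real Cartan-Ambrose-Hicks statement in Lemma~\ref{LocalCAH}, using Proposition~\ref{RcR0} to check the curvature hypothesis and then verifying separately that the isometry produced is in fact holomorphic. First I would set up the data exactly as in the paragraph preceding Lemma~\ref{LocalCAH}: choose $r>0$ small enough that $B_r(p)\subset M$ and $B_r(q)\subset M'$ carry geodesic normal coordinates, and form $\varphi=\exp_q\circ\, I\circ \exp_p^{-1}:B_r(p)\to B_r(q)$. For each geodesic $\gamma:[0,1]\to B_r(p)$ with $\gamma(0)=p$ set $\gamma'=\varphi(\gamma)$ (which is again a geodesic, since $\varphi$ maps radial lines in normal coordinates to radial lines) and $I_\gamma=P_{\gamma'}\circ I\circ P_{-\gamma}:T_{\gamma(1)}M\to T_{\gamma'(1)}M'$.

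The key step is to check the curvature identity $I_\gamma(R(X,Y)Z)=R'(I_\gamma X,I_\gamma Y)I_\gamma Z$ required by Lemma~\ref{LocalCAH}. By Proposition~\ref{RcR0} both curvature tensors are $c$ times the universal expression $R_0$, which is built only from the metric and the complex structure. Parallel transport on a Kähler manifold preserves both $g$ and $J$ (the Levi-Civita connection is the Chern connection, so $\nabla J=0$), and $I$ is by hypothesis a linear isometry intertwining $J$ and $J'$; hence $I_\gamma$ is a linear isometry $T_{\gamma(1)}M\to T_{\gamma'(1)}M'$ with $I_\gamma\circ J=J'\circ I_\gamma$. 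Since $R_0$ is a natural algebraic expression in $g$ and $J$, any such $I_\gamma$ automatically carries $R_0$ to $R_0'$; multiplying by the common constant $c$ gives the desired identity. Lemma~\ref{LocalCAH} then yields that $\varphi$ is an isometry with $d\varphi_{\gamma(1)}=I_\gamma$ for every such $\gamma$; in particular, taking the constant geodesic at $p$, $d\varphi_p=I$, and $\varphi(p)=q$.

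It remains to upgrade ``isometry'' to ``holomorphic isometry.'' The cleanest argument is pointwise: for any point $x=\gamma(1)\in B_r(p)$ we have $d\varphi_x=I_\gamma$, and $I_\gamma$ commutes with $J$ because $I$ does and parallel transport commutes with $J$ on both manifolds. Thus $d\varphi_x\circ J=J'\circ d\varphi_x$ at every $x$, i.e.\ $\varphi$ is an isometry whose differential is everywhere complex-linear; a smooth map between complex manifolds whose differential is $\mathbb{C}$-linear at every point is holomorphic. Setting $U=B_r(p)$ and $V=B_r(q)$ (shrinking if one wants $\varphi$ to be a biholomorphism onto its image, which it is since $d\varphi_p=I$ is invertible) finishes the proof.

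The main obstacle, and the only place any real care is needed, is the verification that $I_\gamma$ transports $R_0$ to $R_0'$; concretely one should confirm that each of the five terms in the definition of $R_0$ is preserved by a $J$-linear linear isometry, using $g'(I_\gamma X,I_\gamma Y)=g(X,Y)$ and $g'(I_\gamma X, J' I_\gamma Y)=g'(I_\gamma X, I_\gamma J Y)=g(X,JY)$. Everything else (the radial geodesic picture, parallel transport preserving $J$, $\mathbb{C}$-linear differential implies holomorphic) is standard and I would state it with brief justification rather than detailed computation.
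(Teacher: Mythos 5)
Your proposal is correct and follows essentially the same route as the paper's proof: construct $\varphi=\exp_q\circ I\circ\exp_p^{-1}$, verify the hypothesis of Lemma \ref{LocalCAH} by noting that $Rm=cR_0$ and $Rm'=cR_0'$ (Proposition \ref{RcR0}) and that $I_\gamma$ is a $J$-linear isometry (since $\nabla J=0$ makes parallel transport commute with $J$), and then conclude holomorphy from $d\varphi_x=I_\gamma$ commuting with the complex structures at every point. The only presentational difference is that the paper carries out the curvature comparison by pairing with $g'$ against an arbitrary $W$, whereas you invoke the naturality of $R_0$ under $J$-linear isometries directly, which is the same computation.
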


\begin{proof}
As above, construct the diffeomorphism $\varphi:=\exp_{p^{'}}\circ I\circ \exp_{p}^{-1}:B_{r}(p)\to B_{r}(q)$. We verify that $\varphi$ satisfies the condition of Lemma \ref{LocalCAH}. Indeed, by Proposition \ref{RcR0}, for each geodesic $\gamma: [0,1]\to B_{r}(p)$ with $\gamma(0)=p$ and $X, Y, Z, W\in T_{\gamma(1)}M$, since $I_{\gamma}=P_{\gamma^{'}}\circ I\circ P_{-\gamma}$, we have

\begin{align}
  &g^{'}\Big(I_{\gamma}\big(R(X,Y)Z\big),I_{\gamma}(W)\Big)\nonumber\\
  =& g^{'}\Big(I\circ P_{-\gamma}\big(R(X,Y)Z\big),I\circ
  P_{-\gamma}(W)\Big)\,(P_{\gamma}\; \text{is a parallel transport.})    \nonumber   \\
  =& g\Big(P_{-\gamma}\big(R(X,Y)Z\big),P_{-\gamma}(W)\Big)\;(I\, \text{is an isometry}) \nonumber \\
  =& g\big(R(X,Y)Z,W\big)\;(P_{-\gamma^{'}}\, \text{is a parallel transport})\nonumber \\
  =& Rm(X,Y,Z,W) \nonumber \\
  =& cR_{0}(X,Y,Z,W)\;(\text{Prop}. \ref{RcR0})\label{1}
\end{align}
By Proposition \ref{RcR0} again, we have
\begin{align}
 &g^{'}\Big(R^{'}\big(I_{\gamma}(X),I_{\gamma}(Y)\big)I_{\gamma}(Z),I_{\gamma}(W)\Big)\nonumber\\
=&Rm^{'}\big(I_{\gamma}(X),I_{\gamma}(Y),I_{\gamma}(Z),I_{\gamma}(W)\big)\nonumber\\
=&cR_{0}^{'}(I_{\gamma}(X),I_{\gamma}(Y),I_{\gamma}(Z),I_{\gamma}(W))\label{2}
\end{align}

On the other hand, the K\"{a}hler condition is equivalent to $\nabla J=0$, which implies $P_{\gamma}\circ J=J\circ P_{\gamma}$. Since

\begin{eqnarray}
I_{\gamma}\circ J &=& P_{\gamma^{'}}\circ I\circ P_{-\gamma}\circ J \nonumber\\
&=& P_{\gamma^{'}}\circ I\circ J\circ P_{-\gamma}\nonumber\\
&=& P_{\gamma^{'}}\circ J^{'}\circ I\circ P_{-\gamma}\nonumber\\
&=& J^{'}\circ P_{\gamma^{'}}\circ I\circ P_{-\gamma}\nonumber\\
&=& J^{'}\circ I_{\gamma}\nonumber
\end{eqnarray}
we have
\begin{eqnarray}\label{3}
R_{0}(X,Y,Z,W)=R_{0}^{'}\Big{(}I_{\gamma}(X),I_{\gamma}(Y),I_{\gamma}(Z),I_{\gamma}(W)\Big{)}
\end{eqnarray}
by the definitions of $R_0$ and $R_{0}^{'}$ and the fact that $I_{\gamma}$ preserves both the metric and the almost complex structure. Combining (\ref{1}), (\ref{2}) and (\ref{3}), we obtain
$$I_{\gamma}\big(R(X,Y),Z\big)=R^{'}\big(I_{\gamma}(X),I_{\gamma}(Y)\big)\big(I_{\gamma}(Z)\big)$$
This completes the verification of the condition, so by Lemma \ref{LocalCAH}, $\varphi$ is an isometry, and $d\varphi=I_{\gamma}$. In addition, $\varphi$ is holomorphic since $d\varphi\circ J=I_{\gamma}\circ J= J^{'}\circ I_{\gamma}=J^{'}\circ d\varphi$.

\end{proof}

Using Lemma \ref{LocalCAH}, one can show the following global isometry theorem:

\begin{theorem}{(\citep[p.41 Theorem 1.37]{Cheeger})}\label{CAHcc}
Let $M$ and $M^{'}$ be complete simply connected Riemannian manifolds of constant sectional curvature $c$, dim$M=$dim$M^{'}=n$. Then given any $p\in M$, $p^{'}\in M^{'}$ and an isometry $I: T_{p}M\to T_{p^{'}}M^{'}$, there exists an isometry $\Phi:M\to M^{'}$ such that $\Phi(p)=p^{'}$ and $d\Phi_{p}=I$. 
\end{theorem}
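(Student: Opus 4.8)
The plan is to prove Theorem \ref{CAHcc} by the standard analytic-continuation argument: starting from the local isometry $\varphi$ produced by Lemma \ref{LocalCAH} on a normal ball $B_r(p)$, one extends it along paths to obtain a globally defined map, and then uses simple connectedness to conclude that this extension is single-valued. More precisely, first I would invoke the hypothesis of constant sectional curvature $c$ to check that the curvature condition of Lemma \ref{LocalCAH} is automatically satisfied: when $\dim M = \dim M' = n$ and both have constant sectional curvature $c$, the full curvature tensor is determined by the metric via the formula $R(X,Y)Z = c\,(g(Y,Z)X - g(X,Z)Y)$, and since each $I_\gamma$ is a linear isometry, it intertwines $R$ and $R'$. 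Hence Lemma \ref{LocalCAH} applies on the normal ball $B_r(p)$, giving a local isometry $\varphi = \exp_{p'}\circ I\circ \exp_p^{-1}$ with $d\varphi_{\gamma(1)} = I_\gamma$.

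Next I would carry out the continuation along curves. Given any point $q \in M$ and any smooth path $\sigma$ from $p$ to $q$, I cover $\sigma$ by finitely many normal balls $B_{r_0}(\sigma(t_i))$ ($t_0 = 0 < t_1 < \dots < t_N = 1$) on which the exponential map is a diffeomorphism; on each ball the local construction above (now with base points $\sigma(t_i)$ and $\sigma'(t_i)$, and the linear isometry $I_{\sigma|_{[0,t_i]}}$ obtained by parallel transport) yields a local isometry, and on the overlaps these local isometries agree because they have the same value and the same differential at the center, and an isometry of a connected normal ball is determined by its $1$-jet at one point (this last fact is itself a consequence of Lemma \ref{LocalCAH} applied to two isometries, or directly from the uniqueness of geodesics). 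This produces a map $\Phi_\sigma$ defined near $q$, i.e. an analytic continuation of $\varphi$ along $\sigma$, and one checks it depends only on the endpoint and the homotopy class of $\sigma$ rel endpoints: for homotopic paths the continuation is unchanged because the germ varies continuously along a homotopy and takes values in a discrete set of germs once the endpoint is fixed.

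Then simple connectedness of $M$ forces $\Phi_\sigma$ to be independent of $\sigma$ altogether, so the local pieces glue to a well-defined global smooth map $\Phi: M \to M'$ which is a local isometry, with $\Phi(p) = p'$ and $d\Phi_p = I$. Finally I would upgrade "local isometry" to "isometry" using completeness: a local isometry between Riemannian manifolds with the target (and source) complete is a Riemannian covering map, and since $M'$ is simply connected the covering $\Phi$ is a diffeomorphism, hence a global isometry. Running the same construction with the roles of $M,M'$ and $I, I^{-1}$ interchanged gives a two-sided inverse, which also shows surjectivity directly if one prefers to avoid quoting the covering-space theorem.

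The main obstacle, and the part deserving the most care, is the path-independence step: one must argue rigorously that analytic continuation of the local isometry along a path depends only on the homotopy class, and here on the fact that two local isometries between connected normal balls that share a common $1$-jet coincide. Everything else — the curvature computation, the covering-map upgrade via completeness, and the construction of the inverse — is routine once this rigidity/monodromy point is nailed down. I would therefore structure the write-up around a clean statement of "a local isometry of a connected manifold is determined by its $1$-jet at a point" and a clean "continuation along homotopic paths agrees" lemma, after which the theorem follows formally.
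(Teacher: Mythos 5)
Your proposal is correct and follows essentially the same route the paper intends: the paper gives no proof of this theorem, citing Cheeger--Ebin after the remark that it follows from Lemma \ref{LocalCAH}, and your argument (constant curvature forces the hypothesis of the local lemma, then analytic continuation of the local isometry along paths, homotopy invariance plus simple connectedness for single-valuedness, and completeness to upgrade the resulting local isometry to a covering map, hence an isometry since $M^{'}$ is simply connected) is exactly the classical proof in that reference. The points you flag as delicate (rigidity of a local isometry by its $1$-jet, connectedness of overlaps, homotopy invariance of the continuation) are indeed where the care is needed, and your plan for handling them is sound.
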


The above theorem also has a K\"{a}hler version:

\begin{theorem}(\citep[p.170 Theorem 7.9]{Kobayashi})\label{KahlerCAH}
Let $(M,J,g)$ and $(M^{'},J^{'},g^{'})$ be two simply-connected complete K\"{a}hler manifolds of constant holomorphic sectional curvature $c$. For each $ p\in M$, $q\in M^{'}$ and each isometry $I:T_pM\to T_qM^{'}$ preserving the almost complex structure, there exists a unique holomorphic isometry $\varphi:M\to M^{'}$ such that $d\varphi_p=I$. In particular, Let $N_c$ be a complete simply connected K\"{a}hler manifold of constant holomorphic sectional curvature $c$. Then the holomorphic isometry group of $N_c$ acts transitively on $N_c$.
\end{theorem}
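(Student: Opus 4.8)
The plan is to globalize the local holomorphic isometry of Proposition~\ref{LocalKahlerCaH} by analytic continuation along paths, following the scheme of the proof of Theorem~\ref{CAHcc} but carrying the complex structures along as well. Fix $p\in M$, $q\in M'$, and the complex-linear isometry $I\colon T_pM\to T_qM'$. To every piecewise-smooth path $c\colon[0,1]\to M$ with $c(0)=p$ I attach a continuation of the germ at $p$ of the holomorphic isometry $\varphi_0$ furnished by Proposition~\ref{LocalKahlerCaH} with $\varphi_0(p)=q$ and $d(\varphi_0)_p=I$: cover $c([0,1])$ by finitely many geodesically convex balls $U_0\ni p,\,U_1,\dots,U_N$, and on $U_k$ use Proposition~\ref{LocalKahlerCaH} to produce a holomorphic isometry whose $1$-jet at $c(t_k)$ is prescribed to be $P_{c'}\circ I\circ P_{-c}$, the parallel transport of $I$ from $p$ to $c(t_k)$ conjugated by that of the image curve $c'$. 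This prescribed $1$-jet is complex-linear because $\nabla J=0$ forces parallel transport to commute with $J$ — precisely the computation $I_\gamma\circ J=J'\circ I_\gamma$ in the proof of Proposition~\ref{LocalKahlerCaH} — and the curvature hypothesis of Lemma~\ref{LocalCAH} needed for each local piece to be an isometry is automatic: by Proposition~\ref{RcR0} both $M$ and $M'$ have curvature tensor $cR_0$, and a map preserving $g$ and $J$ intertwines $R$ and $R'$ exactly as in \eqref{1}--\eqref{3}. Since a holomorphic isometry on a connected domain is determined by a single $1$-jet (an isometry satisfies $\varphi\circ\exp=\exp\circ\,d\varphi$, hence is fixed on each normal ball by its differential at the center), the local pieces glue to a well-defined continuation $\varphi_c$ along $c$ whose differential at each point is the corresponding parallel transport of $I$.

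Next I would prove path-independence by a monodromy argument. If $c_0,c_1$ are homotopic rel endpoints via $H\colon[0,1]^2\to M$, the set of $s$ for which the continuation along $H(s,\cdot)$ has the same endpoint germ as that along $H(0,\cdot)$ is open, by the local rigidity just invoked, and closed, by continuity of parallel transport and of the local solutions of Proposition~\ref{LocalKahlerCaH} in their initial data; hence it is all of $[0,1]$. Since $M$ is simply connected, the continuation depends only on the endpoint, yielding a globally defined $\varphi\colon M\to M'$, and completeness of $M$ guarantees via Hopf--Rinow that every point is the endpoint of a geodesic issuing from $p$, so $\varphi$ is defined on all of $M$. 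By construction $\varphi$ is a local holomorphic isometry with $\varphi(p)=q$ and $d\varphi_p=I$.

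It remains to upgrade $\varphi$ to a global holomorphic isometry and to check uniqueness. Running the same construction with the roles of $(M,p,I)$ and $(M',q,I^{-1})$ interchanged gives $\psi\colon M'\to M$; then $\psi\circ\varphi$ is a local isometry of $M$ fixing $p$ with identity differential, so by the normal-ball rigidity it equals $\mathrm{id}_M$ near $p$, hence everywhere by connectedness, and likewise $\varphi\circ\psi=\mathrm{id}_{M'}$, so $\varphi$ is a biholomorphic isometry with inverse $\psi$. The same rigidity yields uniqueness: two maps satisfying the conclusion differ by a holomorphic isometry of $M$ fixing $p$ to first order, which must be $\mathrm{id}_M$. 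For the final assertion, take $M=M'=N_c$: for arbitrary $p,q\in N_c$ the Hermitian vector spaces $T_pN_c$ and $T_qN_c$ are isometric through some complex-linear $I$, and the resulting $\varphi$ is a holomorphic isometry of $N_c$ carrying $p$ to $q$, which is transitivity. I expect the main difficulty to be the careful bookkeeping in the analytic continuation — verifying that the prescribed $1$-jets along $c$ are consistent with the local isometries of Proposition~\ref{LocalKahlerCaH} and that the monodromy genuinely closes up — rather than any single hard estimate; constant holomorphic sectional curvature is exactly what removes the curvature-compatibility obstruction that the general Cartan--Ambrose--Hicks theorem must impose by hypothesis.
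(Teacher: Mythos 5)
Your argument is correct and is essentially the standard Cartan--Ambrose--Hicks globalization that the paper itself does not reprove but simply cites from \citep[Theorem 7.9]{Kobayashi}: analytic continuation of the local holomorphic isometries of Proposition~\ref{LocalKahlerCaH} along paths with $1$-jets prescribed by parallel transport, path-independence via the monodromy theorem and simple connectedness, and bijectivity by running the construction backwards from $I^{-1}$. One small correction of emphasis: completeness of $M$ is not what keeps the continuation alive (any point of a connected manifold is already joined to $p$ by a path); it is completeness of $M^{'}$ that is essential, since otherwise the continued image curve can leave $M^{'}$ in finite time and the next local isometry cannot be produced --- e.g.\ $M=\mathbb{C}^{n}$ flat and $M^{'}=\mathbb{C}^{n}\setminus\{0\}$ flat ($n\ge 2$) satisfy every other hypothesis, yet no isometry $M\to M^{'}$ exists.
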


\subsection{Developing maps}\label{Devmap}

In this subsection, we review quickly the general theory of developing map in \cite{Thurston}, by which we show the existence of a developing map on a K\"{a}hler manifold of constant holomorphic sectional curvature.

Let $X$ be a connected real analytic manifold, and $G$ a subgroup of the group consisting of real analytic diffeomorphisms of $X$, acting transitively on $X$.

\begin{definition}{(\citep[p.139]{Thurston})}\label{GXmfld}
\textnormal{A $(G,X)$-manifold $M$ is a manifold with the property that for each $p\in M$, there exists a neighborhood $U_i\ni p$ and a diffeomorphism $\phi_i:U_i\to \phi_{i}(U_i)\subset X$ such that any transition function
$$\gamma_{ij}:=\phi_{i}\circ \phi_{j}^{-1}:\phi_j(U_i\cap U_{j})\to \phi_{i}(U_i\cap U_{j})$$
locally agrees with an element of $G$, i.e. $\forall r\in \phi_{j}(U_i\cap U_j)$, there is a neighborhood $U_r\ni r$ and an element $g$ of $G$ such that $\phi|_{U_r}=g|_{U_r}$.}
\end{definition}

\begin{remark}\label{germ}
In the language of sheaves, the chart $(U_i,\phi_i)$ is called a germ. Fixing two germs $(U_i,\phi_i)$ and $(U_j,\phi_j)$ and a point $x_0\in U_i\cap U_j$, we can define an equivalent relation: $(U_i,\phi_i)\sim (U_j,\phi_j)$ if there exists a neighborhood $V\subset U_i\cap U_j$ of $x_0$ such that $\phi_i$ and $\phi_j$ agree on $V$.
\end{remark}

For arbitrary two germs $(U_i, \phi_i)$ and $(U_j, \phi_j)$ ($U_i\cap U_j\not= \emptyset$), consider the transition function:
$$\gamma_{ij}=\phi_{i}\circ \phi_{j}^{-1}:\phi_{j}(U_i\cap U_j)\to \phi_i(U_i\cap U_j).$$
This naturally induces a locally constant map $\widetilde{\gamma_{ij}}:\phi_{j}(U_i\cap U_j)\to G$. Then $\widetilde{\gamma_{ij}}\circ \phi_{j}$ defines a locally constant map from $U_i\cap U_j$ to $G$, which we denote by $\gamma_{ij}$ by abusing the notations. Without loss of generality, we assume that $U_i\cap U_j$ is connected. Then $\gamma_{ij}:U_i\cap U_j\to G$ is actually a constant map. Therefore, $\gamma_{ij}$ can be viewed as an element of $G$. One can easily find that  $\widetilde{\phi}_{j}:=\gamma_{ij}\circ \phi_j$ agrees with $\phi_i$ on $U_i\cap U_j$.

Now we discuss the analytic continuation. Fix a germ $(U_0,\phi_0)$ and a point $x_0\in U_0$. Let $\pi: \widetilde{M}\to M$ be the universal cover of $M$. It is well known that $\widetilde{M}$ can be viewed as the space of homotopy classes of paths in $M$ that start at $x_0$. For an element of $\widetilde{M}$, take a path $\alpha$ representing it. We can choose points
$$x_0=\alpha(t_0),x_1=\alpha(t_1),\cdots, x_n=\alpha(t_n)$$
where $t_0=0$ and $t_n=1$, such that around $x_i$ $(1\le i\le n)$ there exist germs $(U_i,\phi_i)$ $(1\le i\le n)$ satisfying $\alpha|_{[t_i,t_{i+1}]}\subset U_i$ for all $i$. The following picture illustrates the construction:\\

\begin{tikzpicture}
\draw [line width=1pt] plot[smooth] coordinates{(0,0) (1,0.3) (2,0.5) (3,0.5) (4,0.2) (5,0) (6,0) (7,0.1) (8,0.2)(9,0.4) (10,0.5)};
\fill (0,0) circle (1.5pt);
\draw (0,0) circle (1.5);
\node[above] at (0,0) {$\alpha(t_0)$};
\fill (1,0.3) circle (1.5pt);
\draw (1,0.3) circle (1.5);
\node[above] at (1,0.3) {$\alpha(t_1)$};
\fill (4,1) circle (2pt);
\fill (5,1) circle (2pt);
\fill (6,1) circle (2pt);
\draw (9,0.4) circle (1.5);
\fill (9,0.4) circle (1.5pt);
\node[above] at (9,0.4) {$\alpha(t_{n-1})$};
\draw (10,0.5) circle (1.5);
\fill (10,0.5) circle (1.5pt);
\node[below] at (10,0.5) {$\alpha(t_{n})$};
\node[below] at (6,-0.1) {$\alpha$};
 \end{tikzpicture}
\\
Again, without loss of generality, we can assume $U_i\cap U_{i+1}$ is connected.  By the previous discussion, for each $1<i\le n$, there exists an element $g_i$ in $G$ such that $g_i\circ \phi_i$, still denoted by $\phi_i$, agrees with $\phi_{i-1}$ on $U_i\cap U_{i-1}$. Going along $\alpha$, adjust $\phi_i$ in this way one by one. This process forms the analytic continuation of $\phi_0$ along $\alpha$. We use $(U_{n}^{\alpha}, \phi_{0}^{\alpha})$ to denote $(U_{n},\phi_{n})$. Then we have

\begin{prop}{(\citep[p.140]{Thurston})}\label{welldef}
The adjusted germ $(U_n,\phi_n)$ at $x_n=\alpha(1)$ depends only on the homotopy class that $\alpha$ belongs to. More precisely, if $\alpha$ and $\beta$ are in the same homotopy class, then $(U_{n}^{\alpha}, \phi_{0}^{\alpha})\sim (U_{n}^{\beta}, \phi_{0}^{\beta})$ (See Remark \ref{germ}).
\end{prop}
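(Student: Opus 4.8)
The plan is to prove the statement in three stages: first extract the purely algebraic content of the adjustment procedure, then show the resulting germ-class depends only on the path and not on the auxiliary choices, and finally upgrade this to homotopy invariance by a compactness argument on a homotopy. Throughout, a \emph{chain} for a path $\alpha$ from $x_0$ means a partition $0=t_0<t_1<\dots<t_n=1$ together with germs $(U_0,\phi_0),\dots,(U_n,\phi_n)$ (with $(U_0,\phi_0)$ the fixed base germ) such that $\alpha|_{[t_i,t_{i+1}]}\subset U_i$ and each $U_i\cap U_{i+1}$ is connected.

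First I would record the formula for the adjusted germ. The text has already shown that on a connected overlap $U_{i-1}\cap U_i$ the transition germ between $\phi_{i-1}$ and $\phi_i$ is a single element $h_i\in G$, i.e. $h_i\circ\phi_i=\phi_{i-1}$ there. Feeding these adjustments through the analytic continuation one by one, an immediate induction gives that the adjusted final germ at $\alpha(1)$ is $\phi_0^{\alpha}=(h_1h_2\cdots h_n)\circ\phi_n$ on $U_n$, so the total adjustment is just the ordered product of the pairwise adjustments.

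Next I would prove that the equivalence class $[\phi_0^{\alpha}]$ at $\alpha(1)$ (in the sense of Remark \ref{germ}) is independent of the chain. Since any two partitions have a common refinement and any two germs at a point agree near that point, it suffices to check invariance under two elementary moves: (a) inserting an extra subdivision point $t'\in(t_i,t_{i+1})$ while keeping $(U_i,\phi_i)$ on both halves — then the inserted pairwise adjustment is the identity, so the product $h_1\cdots h_n$ is unchanged; and (b) replacing one germ $(U_i,\phi_i)$ by an equivalent one $(U_i',\phi_i')$ on a smaller neighborhood — then the two pairwise adjustments flanking index $i$ change by mutually cancelling conjugating factors, so the ordered product, and hence the final germ, is again unchanged. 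This yields a well-defined class $D(\alpha):=[\phi_0^{\alpha}]$ attached to each path $\alpha$ issuing from $x_0$.

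Finally I would establish homotopy invariance. Let $H:[0,1]\times[0,1]\to M$ be a homotopy rel endpoints with $H(\cdot,0)=\alpha$, $H(\cdot,1)=\beta$, and fix $s_0\in[0,1]$. Choose a chain for the path $H(\cdot,s_0)$; by compactness of each $[t_i,t_{i+1}]\times\{s_0\}$ there is $\varepsilon>0$ with $H([t_i,t_{i+1}]\times(s_0-\varepsilon,s_0+\varepsilon))\subset U_i$ for every $i$. For $|s-s_0|<\varepsilon$ the same partition and the same germs form a chain for $H(\cdot,s)$, so by the formula of the first stage the adjustment product $h_1\cdots h_n$ and the final germ at the fixed endpoint $H(1,s)=\alpha(1)$ are literally the same; thus $D(H(\cdot,s))=D(H(\cdot,s_0))$. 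Hence $s\mapsto D(H(\cdot,s))$ is locally constant on the connected interval $[0,1]$, so $D(\alpha)=D(\beta)$, which is exactly the assertion $(U_n^{\alpha},\phi_0^{\alpha})\sim(U_n^{\beta},\phi_0^{\beta})$. The step I expect to be the main obstacle is the invariance under move (b) in the second stage: one must track the conjugating group elements introduced when a germ is shrunk and verify that they telescope out of the ordered product $h_1\cdots h_n$; once that bookkeeping is done, the Lebesgue-number argument in the last stage is routine.
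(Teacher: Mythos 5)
The paper itself offers no proof of this proposition --- it is stated with a bare citation to Thurston --- so there is no in-text argument to compare against. Your proposal is the standard monodromy-theorem argument, and its architecture (the formula $\phi_0^{\alpha}=(h_1\cdots h_n)\circ\phi_n$, chain-independence, then local constancy in the homotopy parameter via the tube lemma) is sound; stages 1 and 3 are correct as written, and stage 3 correctly exploits that the homotopy is rel endpoints so that the final chart, and hence the final germ at $\alpha(1)$, is literally unchanged for nearby $s$.

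The step that does not work as literally stated is move (b) of stage 2, which you yourself flag as the crux. After passing to a common refinement, two chains for the same path will in general carry charts over a given subinterval that are \emph{not} equivalent in the sense of Remark \ref{germ}: two $(G,X)$-charts containing the same path segment differ by a possibly nontrivial element $g\in G$ on the relevant connected component of their overlap, not merely by restriction to a smaller neighborhood. If $\phi_i'=\phi_i$ near the segment (which is what ``equivalent germ'' means here), the flanking adjustments do not change at all and there are no conjugating factors to cancel --- so move (b) as stated cannot carry you from one chain to the other. The move you actually need is: replace $(U_i,\phi_i)$ by an arbitrary chart $(U_i',\phi_i')$ containing the same segment, with $\phi_i'=g\circ\phi_i$ on the overlap component containing it; then $h_i\mapsto h_i\circ g^{-1}$ and $h_{i+1}\mapsto g\circ h_{i+1}$, so the ordered product $h_1\cdots h_n$ is unchanged --- which is exactly the cancellation you describe, just attached to the wrong move. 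With move (b) restated this way (and a brief check that each intermediate configuration is still an admissible chain, always working with the overlap component containing the path), your argument is complete.
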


By Proposition \ref{welldef}, the developing map is well defined:

\begin{definition}{(\citep[p.140]{Thurston})}\label{devlopmap}
\textnormal{Given a point $x_0\in M$, consider a germ $(U_0,\phi_0)$ where $x_0\in U_0$. The \emph{developing map} of a $(G,X)$-manifold $M$ is the map $D:\widetilde{M}\to X$ such that
$$D=\phi_{0}^{\sigma}\circ \pi$$\
in some neighborhood of $\sigma$, for every $\sigma$ in $\widetilde{M}$.}
\end{definition}

Before going further, we will clarify two points about the developing map. The first is the uniqueness of the developing map. More precisely, we have
\begin{prop}\label{Uniqdevmap}
Let $D_1$ and $D_2$ be two developing maps on a $(G,X)$-manifold $M$. Then there exists a $g$ in $G$ such that $D_1=g\circ D_2$.
\end{prop}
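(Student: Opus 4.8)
The plan is to exploit the rigidity of analytic continuation together with the local uniqueness of charts up to an element of $G$. Fix a basepoint $x_0\in M$ with germs $(U_0,\phi_0)$ and $(V_0,\psi_0)$ defining $D_1$ and $D_2$ respectively. Since $G$ acts transitively and the transition function $\psi_0\circ\phi_0^{-1}$ locally agrees with elements of $G$, after shrinking $U_0$ and $V_0$ to a common connected neighborhood $W_0\ni x_0$ we may, by the discussion preceding Proposition \ref{welldef} (the ``abuse of notation'' paragraph showing $\gamma_{ij}$ is constant on connected overlaps), find a single $g\in G$ with $\psi_0 = g\circ\phi_0$ on $W_0$. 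This $g$ is the candidate, and it remains to check $D_1 = g\circ D_2$ on all of $\widetilde M$.

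The key step is to propagate the identity $\psi_0 = g\circ \phi_0$ along arbitrary paths. Take $\sigma\in\widetilde M$ represented by a path $\alpha:[0,1]\to M$ from $x_0$ to a point $y$, and choose a subdivision $t_0<\cdots<t_n$ together with germs $(U_i,\phi_i)$ for the $D_1$-continuation and $(V_i,\psi_i)$ for the $D_2$-continuation, adjusted as in the construction of the developing map so that consecutive charts agree on the connected overlaps $U_i\cap U_{i-1}$, resp.\ $V_i\cap V_{i-1}$. I would prove by induction on $i$ that $\psi_i = g\circ\phi_i$ on a neighborhood of $\alpha(t_i)$. The base case $i=0$ is the construction of $g$ above. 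For the inductive step: on $U_i\cap U_{i-1}$ (shrunk to a connected open set also contained in $V_i\cap V_{i-1}$ and containing $\alpha(t_i)$) we have $\phi_i=\phi_{i-1}$ and $\psi_i=\psi_{i-1}$ by the adjustment, and $\psi_{i-1}=g\circ\phi_{i-1}$ by induction; hence $\psi_i = g\circ\phi_i$ on this open set, and since both sides are real-analytic diffeomorphisms (compositions of chart maps with an element of $G$) that agree on an open set, they agree wherever both are defined on the connected chart. Evaluating at $i=n$: $\psi_n = g\circ\phi_n$ near $y$, so $D_2 = \psi_0^\sigma\circ\pi = \psi_n\circ\pi$ and $D_1 = \phi_0^\sigma\circ\pi = \phi_n\circ\pi$ near $\sigma$ give $D_1 = g\circ D_2$ near $\sigma$. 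As $\sigma$ was arbitrary, this holds on $\widetilde M$.

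Two minor points need care. First, the choice of $g$ must be independent of the initial shrinking; this is immediate because any two elements of $G$ agreeing on a nonempty open subset of the connected analytic manifold $X$ coincide (real-analytic diffeomorphisms are determined by their restriction to an open set when $X$ is connected), so $g$ is canonical once $D_1$ and $D_2$ are fixed. Second, one should check that the adjusted germs used in continuing $D_1$ and $D_2$ along $\alpha$ can be taken over the \emph{same} subdivision $\{t_i\}$; this follows by passing to a common refinement of the two subdivisions, using Proposition \ref{welldef} to see the final germ is unchanged under refinement.

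The main obstacle is bookkeeping rather than conceptual: one must carefully track that the neighborhoods on which successive identities $\psi_i = g\circ\phi_i$ hold can be chosen inside all of $U_i\cap U_{i-1}$, $V_i\cap V_{i-1}$, and around $\alpha(t_i)$ simultaneously, and invoke the identity theorem for real-analytic maps (equivalently, the locally-constant-hence-constant argument already used in the text) at each stage to upgrade ``agree on an open set'' to ``agree on the connected overlap''. Once this is set up, the induction closes without any computation.
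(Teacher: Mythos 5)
Your argument is essentially the paper's own proof: fix germs of $D_1$ and $D_2$ at a common basepoint, extract the element $g\in G$ relating them there, and propagate the relation along an arbitrary path by induction over a chain of charts, using the locally-constant (identity-principle) argument to upgrade agreement on an overlap to agreement on the whole connected chart. One cosmetic slip: from $\psi_0=g\circ\phi_0$ (and likewise $\psi_n=g\circ\phi_n$) you actually get $D_2=g\circ D_1$, so the element required by the statement is $g^{-1}$ rather than $g$ --- harmless, since $G$ is a group.
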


\begin{proof}
Without loss of generality, fix $x_0\in M$ with two germs $(U_0,\phi_0)$ and $(U_0,\psi_0)$ such that $D_1|_{U_0}=\phi_0$ and $D_2|_{U_0}=\psi_0$ (Here $D_i|_{U_0}$ is actually a single-valued branch of $D_i$ on $U_0$). Indeed, one can take $(U,\phi_0)$ and $(V,\psi_0)$, and then take $U_0:=U\cap V$. Set $\widetilde{g}:=\phi_0\circ \psi_0^{-1}: \psi_0(U_0)\to \phi_0(U_0)$. By the definition of a $(G,X)$-manifold, there exists a $g\in G$ such that $g|_{U_0}=\widetilde{g}$. Now for any $x\in M$, consider a path $\gamma$ connecting $x_0$ and $x$ and cover it successively by $U_0,U_1,\cdots, U_m$ such that $D_1|_{U_i}=\phi_i$ and $D_2|_{U_i}=\psi_i$. Suppose $(U_i,\phi_i)$ and $(U_i,\psi_i)$ satisfy $\phi_i\circ \psi_i^{-1}=g$ on $\psi_i(U_i)$. Since $\phi_i^{-1}\circ \phi_{i+1}=id=\psi_{i}^{-1}\circ \varphi_{i+1}$ on $U_i\cap U_{i+1}$, $\phi_{i+1}\circ \psi_{i+1}^{-1}=\phi_i\circ \psi_i^{-1}$ on $\varphi(U_i\cap U_{i+1})$, which implies $\phi_{i+1}\circ \psi_{i+1}^{-1}=g$ on $\phi_{i+1}(U_{i+1})$. Therefore, by induction, we have $D_1(x)=g(D_2(x))$ for any $x$. Thus, $D_1=g\circ D_2$ on $M$.
\end{proof}
\noindent This will be used in the proof of the uniqueness of the extension of the metric.

The second is called the holonomy (or monodromy) of the developing map. It is possible that the holonomy contains the information of the geometry near singularities (see Section \ref{intro}). We begin with a point $x_0\in M$ an element $\alpha\in \pi_{1}(M,x_0)$. Consider the initial germ $\phi_0$. Then, analytic continuation along a loop representing $\alpha$ gives another germ $\phi_{0}^{\alpha}$ at $x_0$. By the proof of Proposition \ref{Uniqdevmap}, there exists a unique $g_{\alpha}\in G$ such that $\phi_{0}^{\alpha}=g_{\alpha}\circ \phi_0$. Thus, by Proposition \ref{welldef}, it gives a well-defined map $H: \pi_1(M,x_0)\to G$, assigning $\alpha\in \pi_1(M,x_0)$ to $g_{\alpha}\in G$. $H$ is called the holonomy of $M$. Note that For $\sigma,\tau\in \pi_1(M,x_0)$,
$$g_{\sigma}\circ g_{\tau}\circ \phi_{0}=g_{\sigma}\circ \phi_{0}^{\tau}\overset{(*)}{=}\widetilde{\phi_{0}^{\tau}}\circ \phi_0^{\sigma}=g_{\sigma\tau}\circ \phi_0$$
where $\widetilde{\phi_{0}^{\tau}}$ is an analytic continuation along $\tau$ with the initial germ $\phi_{0}^{\sigma}$ and $(*)$ is the result of the same argument as the proof in Proposition \ref{Uniqdevmap}. Therefore, $g_{\sigma\tau}=g_{\sigma}\circ g_{\tau}$, or equivalently, $H:\pi_{1}(M,x_0)\to G$ is a group homomorphism.

We now focus on the case when $M$ is a K\"{a}hler manifold of constant holomorphic sectional curvature $c$.  Now $N_c$ denotes the complete simply-connected K\"{a}hler manifold of constant holomorphic sectional curvature $c$, as stated in Theorem \ref{KahlerCAH}. Let $G$ be the holomorphic isometry group of $N_c$.

\begin{lemma}\label{GNc}
Let $(M,J,g)$ be a K\"{a}hler manifold of constant holomorphic sectional curvature $c$, then $M$ is a $(G,N_c)$-manifold.
\end{lemma}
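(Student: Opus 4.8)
The plan is to verify Definition~\ref{GXmfld} directly for $M$, using Proposition~\ref{LocalKahlerCaH} to manufacture the charts and Theorem~\ref{KahlerCAH} (more precisely its transitivity corollary) to guarantee they land in $N_c$. Fix a base point $p_0 \in M$ and fix once and for all an isometry $I_0 : T_{p_0}M \to T_{q_0}N_c$ preserving the complex structures, where $q_0 \in N_c$ is arbitrary. For an arbitrary point $p \in M$, I first need to produce a chart around $p$ into $N_c$. Pick any smooth path from $p_0$ to $p$ (or simply work locally, since existence of \emph{some} complex-linear isometry $T_pM \to T_qN_c$ for \emph{some} $q$ is all that is needed for the chart; parallel transport along a path is one way to get it, but not essential). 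Then apply Proposition~\ref{LocalKahlerCaH} with this $I$: it yields neighborhoods $U \ni p$, $V \ni q$ and a holomorphic isometry $\phi : U \to V \subset N_c$. This $\phi$ is the required chart $\phi_p : U_p \to \phi_p(U_p) \subset N_c$. Doing this for every $p$ gives an open cover $\{U_i\}$ of $M$ with holomorphic-isometry charts $\phi_i : U_i \to N_c$.

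Next I must check the compatibility condition: for $U_i \cap U_j \neq \emptyset$ and any $r \in \phi_j(U_i \cap U_j)$, the transition $\gamma_{ij} = \phi_i \circ \phi_j^{-1}$ agrees near $r$ with an element of $G$. Set $x := \phi_j^{-1}(r) \in U_i \cap U_j$, and consider the two germs $\phi_i, \phi_j$ at $x$. Both are holomorphic isometries of a neighborhood of $x$ into $N_c$, so $\phi_i(x) =: q_i$ and $\phi_j(x) =: q_j$ are two points of $N_c$, and $d\phi_i|_x \circ (d\phi_j|_x)^{-1} : T_{q_j}N_c \to T_{q_i}N_c$ is an isometry commuting with $J$. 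By Theorem~\ref{KahlerCAH} (applied with $M = M' = N_c$, which is complete and simply connected), there is a \emph{global} holomorphic isometry $\Psi \in G$ of $N_c$ with $\Psi(q_j) = q_i$ and $d\Psi|_{q_j} = d\phi_i|_x \circ (d\phi_j|_x)^{-1}$. Now $\Psi \circ \phi_j$ and $\phi_i$ are two holomorphic isometries of a neighborhood of $x$ into $N_c$ that agree to first order at $x$ (same value $q_i$, same differential). Invoking the uniqueness clause of Theorem~\ref{KahlerCAH} once more — this time in the form that a holomorphic isometry between connected K\"ahler manifolds of constant holomorphic sectional curvature is determined by its $1$-jet at one point, which is exactly the local statement underlying Proposition~\ref{LocalKahlerCaH} — we conclude $\Psi \circ \phi_j = \phi_i$ on a (possibly smaller) connected neighborhood of $x$. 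Hence $\gamma_{ij} = \phi_i \circ \phi_j^{-1}$ equals $\Psi|_{\phi_j(\text{nbhd})}$ near $r$, with $\Psi \in G$, which is precisely the required condition.

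I expect the only real subtlety to be the rigidity step: knowing that two local holomorphic isometries into $N_c$ with the same $1$-jet at a point coincide on a neighborhood. This is not literally the statement of Proposition~\ref{LocalKahlerCaH} (which produces \emph{a} map with prescribed $1$-jet) but of its uniqueness, which follows from Lemma~\ref{LocalCAH}: in geodesic normal coordinates at $x$, the map $\exp_{q}\circ (d\phi_i|_x) \circ \exp_x^{-1}$ is forced to equal $\phi_i$ by Lemma~\ref{LocalCAH} (the curvature-compatibility hypothesis holds automatically since both manifolds have constant holomorphic sectional curvature $c$, by the computation in the proof of Proposition~\ref{LocalKahlerCaH}), and likewise equals $\Psi \circ \phi_j$; so the two agree. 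Everything else is bookkeeping: $M$ is a complex (hence real analytic) manifold, $G$ acts transitively on $N_c$ by the last sentence of Theorem~\ref{KahlerCAH}, and $N_c$ is real analytic with $G$ acting by real analytic diffeomorphisms, so all the hypotheses preceding Definition~\ref{GXmfld} are met. I would therefore organize the write-up as: (1) build the charts via Proposition~\ref{LocalKahlerCaH}; (2) state the local rigidity consequence of Lemma~\ref{LocalCAH}; (3) combine with transitivity of $G$ to identify transition maps with elements of $G$.
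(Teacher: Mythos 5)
Your proposal is correct and follows essentially the same route as the paper: charts from Proposition \ref{LocalKahlerCaH}, then Theorem \ref{KahlerCAH} to produce a global holomorphic isometry of $N_c$ matching the $1$-jet of the transition map at each point, and local rigidity to conclude they agree nearby. The only difference is that you make the rigidity step explicit (a local isometry is determined by its $1$-jet, since it intertwines exponential maps), whereas the paper asserts ``$\Phi=\psi_{p_1p_2}$ in a neighborhood of $r_2$'' without comment; note, though, that this rigidity is really the standard fact that isometries commute with $\exp$, rather than a consequence of Lemma \ref{LocalCAH} itself, which only constructs a map with prescribed $1$-jet.
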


\begin{proof}
 By Proposition \ref{LocalKahlerCaH}, for each $p\in M$, $q\in N_c$, there exist neighborhoods $U_p\ni p$ , $V_q\ni q$ and a holomorphic isometry $\phi_p:U_p \to V_q$ such that $\phi_p(p)=q$. It suffices to verify that $\{(U_p,\phi_p)\}_{p\in M}$ form a chart of a $(G,N_c)$-manifold. Indeed, if $U_{p_1}\cap U_{p_2}\not= \emptyset$, then $\psi_{p_1p_2}:=\phi_{p_1}\circ \phi_{p_2}^{-1}$ is a holomorphic isometry from $\phi_{p_2}(U_{p_1}\cap U_{p_2})$ to $\phi_{p_1}(U_{p_1}\cap U_{p_2})$ of $N_c$. Set $q_i:=\phi_{p_i}(p_i) (i=1,2)$.

We claim that $\psi_{p_1p_2}$ agrees locally with an element of $G$. In fact, for each $r_2\in V_{q_2}$, set $r_1:=\psi_{p_1p_2}(r_2)$. By Theorem \ref{KahlerCAH}, there exists a holomorphic isometry $\Phi$ defined on the whole $N_c$ such that $\Phi(r_2)=r_1$ and $(d\Phi)_{r_2}=(d\psi_{p_1p_2})_{r_2}$. Therefore, $\Phi=\psi_{p_1p_2}$ in a neighborhood of $r_2$.
\end{proof}

To sum up, we have:
\begin{theorem}\label{devexist}
Let $(M,J,g)$ be a (not necessarily complete) K\"{a}hler manifold of constant holomorphic sectional curvature $c$. Then there exists a holomorphic developing map $f:\widetilde{M}\to N_c$, where $\widetilde{M}$ is the universal cover of $M$. $f$ can also be viewed as a multivalued holomorphic local isometry from $M$ to $N_c$. The pull-back of the K\"{a}hler metric on $N_c$ by $f$ is exactly $g$.
\end{theorem}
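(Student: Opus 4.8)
The plan is to assemble Theorem \ref{devexist} from the pieces already in place. By Lemma \ref{GNc}, the K\"ahler manifold $(M,J,g)$ is a $(G,N_c)$-manifold, where $N_c$ is the complete simply connected space form of constant holomorphic sectional curvature $c$ and $G$ is its holomorphic isometry group (acting transitively on $N_c$ by Theorem \ref{KahlerCAH}, so the hypotheses of Definition \ref{GXmfld} are met). The general theory recalled in Subsection \ref{Devmap} then produces, via analytic continuation of an initial germ (Proposition \ref{welldef}), a well-defined developing map $f:=D:\widetilde M\to N_c$ in the sense of Definition \ref{devlopmap}. So the only real content of the theorem is to check that this $f$ inherits the analytic/metric properties one expects: that it is holomorphic, that it is a local isometry, and that $f^*g_{N_c}=g$ (equivalently, that pulling back gives exactly $g$, the metric we started with).

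First I would note that each chart map $\phi_i$ supplied by Lemma \ref{GNc} is, by construction (via Proposition \ref{LocalKahlerCaH}), a holomorphic isometry from its domain onto an open subset of $N_c$; in particular $\phi_i^*g_{N_c}=g|_{U_i}$ and $\phi_i$ intertwines $J$ with the complex structure of $N_c$. The adjustments made during analytic continuation replace $\phi_i$ by $g_i\circ\phi_i$ for elements $g_i\in G$, and since every element of $G$ is itself a holomorphic isometry of $N_c$, the adjusted germs are still holomorphic isometries onto their images and still pull back $g_{N_c}$ to $g$. Hence on a neighborhood of each $\sigma\in\widetilde M$ the map $f$ equals $\phi_0^\sigma\circ\pi$, a composition of the local isometry $\pi$ (the covering map, which is a local isometry for the pulled-back metric on $\widetilde M$) with a holomorphic isometry onto an open set of $N_c$; therefore $f$ is locally a holomorphic isometry, and $f^*g_{N_c}=\pi^*g$. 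Descending to $M$, $f$ is a multivalued holomorphic local isometry whose pullback of the metric on $N_c$ is $g$. The ``multivalued'' description is just the statement that the various single-valued branches on a simply connected open $U\subset M$ are the germs $\phi_0^\sigma$ and differ by elements of $G$, which is exactly the content of Propositions \ref{Uniqdevmap} and \ref{welldef}.

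I do not expect a serious obstacle here: the theorem is essentially a bookkeeping corollary of Lemma \ref{GNc} together with Thurston's general construction. The one point that deserves a careful sentence is the compatibility of the metrics under the analytic-continuation adjustments — i.e.\ verifying that left-composing a germ with $g_i\in G$ does not disturb the identity $\phi_i^*g_{N_c}=g$ — but this is immediate because $G$ consists of isometries of $N_c$. A second minor point worth making explicit is that holomorphicity and the local-isometry property are local conditions, so they pass from the germs to $f$ without any global argument; simple connectedness of $\widetilde M$ is used only to guarantee (via Proposition \ref{welldef}) that the continuation is path-independent and hence $f$ is a genuine, globally defined map on $\widetilde M$.
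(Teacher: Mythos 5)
Your proposal is correct and follows essentially the same route as the paper: both invoke Lemma \ref{GNc} to see that $M$ is a $(G,N_c)$-manifold, obtain $f$ from Thurston's analytic-continuation construction, and note that holomorphicity and the local-isometry property (hence $f^*g_{N_c}=g$) pass from the germs $\phi_0^\sigma$ to $f$. Your extra remark that left-composing a germ with an element of $G$ preserves these properties is the same observation the paper leaves implicit.
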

\begin{proof}
By Lemma \ref{GNc}, $M$ is a $(G,N_c)$-manifold, so one can construct a developing map $f:\widetilde{M}\to N_c$ by Definition \ref{devlopmap}. To show $f$ is holomorphic, one only needs to note that the projection $\pi:\widetilde{M}\to M$ and the analytic continuation $\phi^{\sigma}_{0}$ are holomorphic. By assigning $x\in M$ to $f(\pi^{-1}(x))$, $f$ can be naturally viewed as a multivalued holomorphic map from $M$ to $N_c$. Finally,  since $\phi^{\sigma}_{0}$ is an isometry, $f$ is a local isometry, and the last assertion also follows immediately.
\end{proof}

\begin{remark}
When proving that the developing map $f$ is holomorphic, the essential part is that the analytic continuation $\phi^{\sigma}_{0}$ is holomorphic, which is actually showed when we prove that $\varphi$ is holomorphic in Proposition \ref{LocalKahlerCaH}. Different from \cite{CWWX}, in which they proved that the developing map is holomorphic by the fact that an orientation-preserving conformal map from a domain of $\mathbb{C}$ to $\mathbb{C}$ is holomorphic, the proof here only uses the K\"{a}hler condition and is more general in the sense that it does not rely on the dimension any more.
\end{remark}

Let $(M_c, g_c)$ be the complete and simply-connected Riemannian manifold of constant sectional curvature $c$. Let $G$ be the isometry group of $M_c$. Then $G$ acts transitively on $M_c$ by Theorem \ref{CAHcc}. The same argument can be applied to a Riemannian manifold $(M,g)$ of constant sectional curvature $c$ to show  the existence of the developing map on $M$ to the corresponding space form. More precisely,
\begin{theorem}\label{Riemdev}
Let $(M,g)$ be a Riemannian manifold of constant sectional curvature $c$. Then there exists a multivalued locally univalent real analytic map $f:M\to M_c$ such that $g=f^{*}g_c$.
\end{theorem}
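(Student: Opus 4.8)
The plan is to reproduce, almost verbatim, the construction that yielded the holomorphic developing map in Theorem~\ref{devexist}, substituting the purely Riemannian Cartan--Ambrose--Hicks input (Lemma~\ref{LocalCAH} and Theorem~\ref{CAHcc}) for its K\"ahler counterparts (Proposition~\ref{LocalKahlerCaH} and Lemma~\ref{GNc}). Recall that $M_c$ is the complete simply connected space form of curvature $c$ and that $G:=\mathrm{Isom}(M_c)$ acts transitively on it by Theorem~\ref{CAHcc}. Since $M_c$ is one of $\mathbb{R}^n$, $S^n$ or $\mathbb{H}^n$, it is a real analytic manifold, its metric is real analytic, and every isometry of it is real analytic; hence $(G,M_c)$ is legitimate data for the theory of $(G,X)$-manifolds reviewed in Subsection~\ref{Devmap}. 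We assume $M$ connected (otherwise argue on each component).

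First I would prove the local model statement: for every $p\in M$ and every $q\in M_c$ there are neighbourhoods $U_p\ni p$, $V_q\ni q$ and a real analytic isometry $\varphi_p\colon U_p\to V_q$ with $\varphi_p(p)=q$. To obtain it, fix a linear isometry $I\colon T_pM\to T_qM_c$ (which exists because the two tangent spaces are inner product spaces of equal dimension), put $\varphi_p:=\exp_q\circ I\circ\exp_p^{-1}$ on a small geodesic ball around $p$, and check the hypothesis of Lemma~\ref{LocalCAH}. This is automatic in the constant curvature setting: the curvature tensors of $M$ and $M_c$ are both given by the universal algebraic expression $R(X,Y)Z=c\big(g(X,Z)Y-g(Y,Z)X\big)$ (in the sign convention of the present paper), and the map $I_\gamma=P_{\gamma'}\circ I\circ P_{-\gamma}$ of Lemma~\ref{LocalCAH} is a composition of parallel transports with a linear isometry, hence itself a linear isometry, so it intertwines these two expressions. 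Lemma~\ref{LocalCAH} then gives that $\varphi_p$ is an isometry with $d(\varphi_p)_p=I$; it is real analytic because the exponential map of a real analytic metric is real analytic. This is the exact analogue of Proposition~\ref{LocalKahlerCaH}, only easier, since no complex structure needs to be tracked.

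Next I would verify that the atlas $\{(U_p,\varphi_p)\}_{p\in M}$ turns $M$ into a $(G,M_c)$-manifold, exactly as in the proof of Lemma~\ref{GNc}. If $U_{p_1}\cap U_{p_2}\neq\emptyset$, the transition map $\psi:=\varphi_{p_1}\circ\varphi_{p_2}^{-1}$ is a local isometry between open subsets of $M_c$; for each $r$ in its domain, Theorem~\ref{CAHcc} supplies a global isometry $\Phi\in G$ with $\Phi(r)=\psi(r)$ and $d\Phi_r=d\psi_r$, and $\Phi=\psi$ near $r$ since two isometries sharing a $1$-jet at a point coincide near that point (again by Lemma~\ref{LocalCAH}). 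Thus $\psi$ agrees locally with an element of $G$, which is the defining property in Definition~\ref{GXmfld}.

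Finally, $M$ being a real analytic $(G,M_c)$-manifold, Definition~\ref{devlopmap} together with Proposition~\ref{welldef} produces a developing map $D\colon\widetilde M\to M_c$; it is real analytic and a local diffeomorphism because locally it is a chart $\varphi_p$ precomposed with the covering projection $\pi$. Regarding $f:=D\circ\pi^{-1}$ as a multivalued map $M\to M_c$, each single-valued branch is one of the isometries $\varphi_p$, so $f^{*}g_c=g$, proving Theorem~\ref{Riemdev}. I do not anticipate a genuine obstacle here: the statement is the Riemannian shadow of the already established Theorem~\ref{devexist}, and the only steps needing attention are the (trivial) verification of the curvature hypothesis of Lemma~\ref{LocalCAH} and the bookkeeping ensuring that $M_c$, the group $G$, and the charts $\varphi_p$ are all real analytic, so that Thurston's framework applies word for word.
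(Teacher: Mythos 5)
Your proposal is correct and follows exactly the route the paper intends: the paper itself only remarks that ``the same argument'' as in Lemma \ref{GNc} and Theorem \ref{devexist} applies, with Lemma \ref{LocalCAH} and Theorem \ref{CAHcc} replacing their K\"ahler counterparts, which is precisely what you carry out. Your explicit verification of the curvature hypothesis via the universal constant-curvature formula and the real-analyticity bookkeeping fills in details the paper leaves implicit, but introduces no new idea.
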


\subsection{Functions of several Complex Variables}\label{Severalvar}

Before going to the proofs of the main results, we need to recall some results in complex analysis. One is Hartogs' extension theorem. We introduce two different versions here.

\begin{theorem}(\citep[p.30 Theorem 2.3.2]{Hormander})\label{HartogK}
Let $U$ be an open set of $\mathbb{C}^{n}$, $n>1$, and let $K$ be a compact subset of $U$ such that $U\setminus K$ is connected. Then for every holomorphic function $f$ on $U\setminus K$, $f$ extends holomorphically to $U$. The extension is unique.
\end{theorem}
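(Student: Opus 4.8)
The plan is to prove the statement by the classical $\bar\partial$-argument, which also makes transparent why the hypothesis $n>1$ cannot be dropped. Uniqueness is the easy half: $U\setminus K$ is a nonempty open set, and no connected component of $U$ can be contained in the compact set $K$ (such a component would be closed as well as open in $\mathbb{C}^n$, hence empty), so every component of $U$ meets $U\setminus K$; consequently any two holomorphic functions on $U$ that agree on $U\setminus K$ agree on $U$ by the identity theorem. So it suffices to produce one holomorphic $F$ on $U$ with $F|_{U\setminus K}=f$.

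For existence I would first fix a cut-off function $\chi\in C_0^\infty(U)$ with $\chi\equiv 1$ on a neighbourhood of $K$, and set $v:=-f\,\bar\partial\chi$. Since $\bar\partial\chi$ vanishes near $K$ while $f$ is holomorphic on $U\setminus K$, the form $v$ is a smooth $(0,1)$-form with compact support contained in $U\setminus K$; extending it by $0$ we may view $v$ as a compactly supported smooth $(0,1)$-form on all of $\mathbb{C}^n$, and $\bar\partial v=-\bar\partial f\wedge\bar\partial\chi=0$. Writing $v=\sum_{j=1}^{n}v_j\,d\bar z_j$, I would then produce one solution of $\bar\partial u=v$ by the one-variable Cauchy transform in the first coordinate,
\[
u(z_1,\dots,z_n):=\frac{1}{2\pi i}\int_{\mathbb{C}}\frac{v_1(\zeta,z_2,\dots,z_n)}{\zeta-z_1}\,d\zeta\wedge d\bar\zeta .
\]
Differentiating under the integral sign gives $\partial u/\partial\bar z_1=v_1$, and the remaining equations $\partial u/\partial\bar z_k=v_k$ for $k\ge 2$ follow from the relations $\partial v_1/\partial\bar z_k=\partial v_k/\partial\bar z_1$ that are contained in $\bar\partial v=0$, again via the same one-variable identity.

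The crux — and the step that forces $n>1$ — is to show that this particular $u$ has compact support. First, $u$ is smooth with $\bar\partial u=v$, hence holomorphic on $\mathbb{C}^n\setminus\operatorname{supp}v$. Second, if $(z_2,\dots,z_n)$ lies outside the (compact) projection of $\operatorname{supp}v$ onto the last $n-1$ coordinates, then $v_1(\zeta,z_2,\dots,z_n)\equiv 0$ in $\zeta$, so $u\equiv 0$ there; because $n\ge 2$ this is a nonempty open subset of the connected open set $\mathbb{C}^n\setminus\operatorname{supp}v$, so the identity theorem gives $u\equiv 0$ on all of $\mathbb{C}^n\setminus\operatorname{supp}v$, i.e. $\operatorname{supp}u\subset\operatorname{supp}v\subset U$. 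Finally I would set $F:=(1-\chi)f-u$ on $U$: then $\bar\partial F=v-v=0$, so $F$ is holomorphic on $U$, and on the nonempty open set $U\setminus\operatorname{supp}\chi$ — which lies in $U\setminus K$ and is disjoint from $\operatorname{supp}u$ — we have $F=f$. Connectedness of $U\setminus K$ and the identity theorem then give $F=f$ on all of $U\setminus K$, so $F$ is the desired extension. I expect the compact-support claim for $u$ to be the only genuine obstacle; it is precisely where $n>1$ is used, reflecting the fact that isolated singularities of holomorphic functions of one variable need not be removable.
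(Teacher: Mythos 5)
Your overall strategy is exactly the classical $\bar\partial$-argument of Ehrenpreis that underlies H\"ormander's Theorem 2.3.1--2.3.2 (the paper itself only quotes the result, so that reference is the right benchmark), and the construction of $v$, the Cauchy-transform solution $u$, and the candidate extension $F=(1-\chi)f-u$ are all correct. But the crux step contains a genuine error: you assert that $\mathbb{C}^n\setminus\operatorname{supp}v$ is connected and conclude $u\equiv 0$ there, hence $\operatorname{supp}u\subset\operatorname{supp}v$. The complement of a compact set in $\mathbb{C}^n$ need not be connected, and here it typically is not: since $\chi\equiv 1$ near $K$ and $\chi$ has compact support in $U$, the set $\operatorname{supp}\bar\partial\chi\supset\operatorname{supp}v$ separates a neighbourhood of $K$ from infinity (take $K=\{0\}$ and $\chi$ radial: $\operatorname{supp}\bar\partial\chi$ is a spherical shell). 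Your conclusion is in fact provably false in general: a neighbourhood of $K$ lies in $\mathbb{C}^n\setminus\operatorname{supp}v$, so your claim would give $u\equiv 0$ there and hence $F=(1-\chi)f-u=-u\equiv 0$ near $K$, forcing the extension (and then $f$ itself, by the identity theorem) to vanish identically --- absurd already for $f\equiv 1$.

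What is actually true is that $u$ vanishes on the \emph{unbounded} component of $\mathbb{C}^n\setminus\operatorname{supp}v$: the set $\{|(z_2,\dots,z_n)|>R\}$ is connected (this is where $n\ge 2$ enters), unbounded, and contained in that component, and $u$ vanishes on it, so the identity theorem applies component-wise. This gives $\operatorname{supp}u$ compact but possibly strictly larger than $\operatorname{supp}v$, and in particular $u$ need not vanish on $U\setminus\operatorname{supp}\chi$. To finish you must still exhibit a nonempty open subset of $U\setminus K$ on which both $\chi=0$ and $u=0$; the standard way is to show that the unbounded component $V$ of $\mathbb{C}^n\setminus\operatorname{supp}\chi$ (on which $u=0$, since $V$ sits inside the unbounded component of $\mathbb{C}^n\setminus\operatorname{supp}v$) satisfies $V\cap U\neq\emptyset$. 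This needs a short topological argument: if $V\cap U=\emptyset$, then $U$ is contained in the compact set $\widehat{L}:=\mathbb{C}^n\setminus V$ (the union of $L:=\operatorname{supp}\chi$ with the bounded components of its complement); since $\partial\widehat{L}\subset\partial L\subset L\subset U\subset\operatorname{int}\widehat{L}$, the set $\widehat{L}$ would be open and closed, hence all of $\mathbb{C}^n$, contradicting compactness. With $V\cap U$ nonempty and contained in $U\setminus K$, you get $F=f$ on a nonempty open set, and your final appeal to the connectedness of $U\setminus K$ and the identity theorem then closes the proof as you intended. The uniqueness half of your argument is fine as written.
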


\begin{theorem}(\citep[p.34 Theorem 1.25]{Voisin})\label{Hartogz1z2}
Let $U$ be an open set of $\mathbb{C}^{n}$, $n>1$, and $f$ a holomorphic function on $V:=U\setminus \{z|z_1=z_2=0\}$. Then there exists a unique holomorphic function $F$ on $U$ such that $F|_{V}=f$.
\end{theorem}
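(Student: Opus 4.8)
The plan is to localise around a point of $U\cap\{z_1=z_2=0\}$ and to build the extension there by a one–variable Cauchy integral in the $z_1$–variable, exploiting that a circle $\{|z_1|=\rho\}$ of positive radius is disjoint from $\{z_1=z_2=0\}$, so the integrand stays in the domain $V$ of $f$ no matter what the value of $z_2$ is.

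First I would fix $p_0=(0,0,a_3,\dots,a_n)\in U\cap\{z_1=z_2=0\}$ and choose $r_1,\dots,r_n>0$ so that the closed polydisc $\overline{\Delta}\subset U$, where $\Delta:=\{|z_1|<r_1\}\times\{|z_2|<r_2\}\times\prod_{j\ge3}\{|z_j-a_j|<r_j\}$; this is possible since $U$ is open. Picking $0<\rho<r_1$, define on the polydisc $\Delta':=\{|z_1|<\rho\}\times\{|z_2|<r_2\}\times\prod_{j\ge3}\{|z_j-a_j|<r_j\}$ the function
\[
F(z_1,\dots,z_n):=\frac{1}{2\pi i}\int_{|\zeta|=\rho}\frac{f(\zeta,z_2,\dots,z_n)}{\zeta-z_1}\,d\zeta .
\]
On the contour $\zeta\ne0$, so $(\zeta,z_2,\dots,z_n)\in\Delta\subset U$ and $(\zeta,z_2,\dots,z_n)\notin\{z_1=z_2=0\}$, i.e.\ it lies in $V$, whether or not $z_2=0$; hence the integrand is defined and jointly holomorphic in all variables near the contour, and $F$ is well defined and holomorphic on $\Delta'$ (differentiation under the integral sign, or Morera's theorem applied in each variable).

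Next I would identify $F$ with $f$ off the bad set. If $z_2\ne0$, then $(\zeta,z_2,\dots,z_n)$ avoids $\{z_1=z_2=0\}$ for \emph{every} $\zeta$, hence $\zeta\mapsto f(\zeta,z_2,\dots,z_n)$ is holomorphic on the whole disc $\{|\zeta|<r_1\}$ and the usual Cauchy integral formula gives $F=f$ on the open dense subset $\{z_2\ne0\}\cap\Delta'$. Since $F$ and $f$ are continuous on $\Delta'\setminus\{z_1=z_2=0\}$ and agree on a dense subset of it, they agree on all of $\Delta'\setminus\{z_1=z_2=0\}=\Delta'\cap V$; thus $F$ extends $f$ holomorphically over the neighbourhood $\Delta'$ of $p_0$. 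Performing this construction at every $p_0\in U\cap\{z_1=z_2=0\}$ produces local holomorphic extensions which, on their mutual overlaps and on the pieces meeting $V$, all coincide with $f$ on the dense locus $\{z_2\ne0\}$ and hence, being continuous, coincide wherever they are jointly defined; gluing them to $f$ on $V$ yields a holomorphic $F$ on all of $U$ with $F|_V=f$. Uniqueness is immediate: $V$ is dense in $U$ (the set $\{z_1=z_2=0\}$ has empty interior), so any two holomorphic — indeed any two continuous — functions on $U$ extending $f$ must agree.

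\emph{On the difficulty.} This is the classical Hartogs removal of an analytic set of codimension $\ge2$, so there is no essential obstacle; what requires care is (i) choosing the Cauchy contour of positive radius so the integrand never leaves $V$, in particular over $\{z_2=0\}$, and (ii) upgrading the identity $F=f$ from $\{z_2\ne0\}$ to the full punctured polydisc by density and continuity. Note that, in contrast with the codimension-one Riemann extension theorem, no boundedness hypothesis on $f$ is needed; alternatively one could fix $(z_3,\dots,z_n)$, apply Theorem~\ref{HartogK} on the resulting two-dimensional slice with $K$ a single point, and then verify holomorphic dependence on the parameters — which is precisely what the displayed integral accomplishes in one stroke.
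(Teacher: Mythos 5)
Your proof is correct, but it takes a different route from the paper. The paper does not prove this statement directly: it cites Voisin and, in Remark \ref{Kinducez1z2}, sketches a reduction to the compact-$K$ Hartogs theorem (Theorem \ref{HartogK}) by freezing $(z_3,\dots,z_n)$ and extending $f$ across the origin of each two-dimensional slice $\{(z_1,z_2,w_3,\dots,w_n)\}$. That slicing argument leaves implicit the key analytic point, namely why the slice-by-slice extensions assemble into a function that is jointly holomorphic in all $n$ variables. Your single Cauchy integral
$F(z)=\frac{1}{2\pi i}\int_{|\zeta|=\rho}\frac{f(\zeta,z_2,\dots,z_n)}{\zeta-z_1}\,d\zeta$
settles exactly that issue in one stroke: holomorphy in $z_1$ comes from the kernel and holomorphy in the remaining variables from differentiation under the integral sign, while the choice $|\zeta|=\rho>0$ keeps the integrand inside $V$ even over $\{z_2=0\}$. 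The identification $F=f$ first on the dense open set $\{z_2\neq 0\}$ via the one-variable Cauchy formula, then on all of $\Delta'\cap V$ by continuity, and the uniqueness via density of $V$ in $U$, are all sound. So your argument is self-contained and actually closes the gap that the paper's remark glosses over; the paper's reduction, on the other hand, is shorter and reuses Theorem \ref{HartogK}, which is convenient since that theorem is already quoted and used elsewhere in the paper. Either way the statement holds, and your closing observation that the displayed integral is precisely the "holomorphic dependence on parameters" step missing from the slicing argument is exactly the right remark.
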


\begin{remark}\label{Kinducez1z2}
Actually, Theorem \ref{HartogK} induces Theorem \ref{Hartogz1z2}. Indeed, for a fixed $(0,0,w_3,\cdots, w_n)\in U$, consider $f|_{V}$, where $V=\{(z_1,z_2,w_3,\cdots, w_n)\in U||z_1|^2+|z_2|^2\not=0\}\cap U$. Then $f|_{V}$ can be viewed as a holomorphic map on $B^2_{r}\setminus\{0\}$($r>0$ is sufficiently small). Therefore, $f|_{V}$ can extend to a holomorphic map in two variables $z_1$ and $z_2$ on $\widetilde{V}=\{(z_1,z_2,w_3,\cdots, w_n)|z_1,z_2\in \mathbb{C}\}\cap U$ by Theorem \ref{HartogK}. In this way, $f$ holomorphically extends to $U$.
\end{remark}

Another is the Weierstrass preparation theorem, which gives the local geometry of the zero sets of holomorphic functions. First we give the definition of a Weierstrass polynomial.

\begin{definition}(\citep[p.7 Definition 1.1.5]{Daniel})\label{Wpoly}
\textnormal{A \emph{Weierstrass polynomial} is a polynomial in $z_1$  of the form
$$z_1^d+\alpha_{1}(w)z_1^{d-1}+\cdots+\alpha_{d}(w)$$
where $\alpha_i(w)$ are holomorphic functions on some small ball in $\mathbb{C}^{n-1}$ with $\alpha_i(0)=0$.}
\end{definition}

\begin{theorem}(\citep[p.8 Proposition 1.1.6]{Daniel})\label{WPT}
Let $f:P_{\epsilon}(0)\to \mathbb{C}$ be a holomorphic function on the polydisc $P_{\epsilon}(0):=\{(z_1,\cdots, z_n)| |z_i|<\epsilon_i, 1\le i\le n\}$, in which $\epsilon=(\epsilon_1,\cdots, \epsilon_n)$. Assume $f(0)=0$ and $f_{0}(z_1):=f(z_1,0,\cdots,0)\not\equiv 0$. Then there exists a unique Weierstrass polynomial $g(z_1,w)$ and a holomorphic function $h$ on some smaller polydisc $P_{\epsilon^{'}}(0)\subset P_{\epsilon}(0)$ such that $f=g\cdot h$ and $h(0)\not=0$.
\end{theorem}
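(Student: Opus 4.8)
The plan is to run the classical argument-principle proof of the Weierstrass preparation theorem: realize $g$ as the monic polynomial in $z_1$ whose roots are the zeros of $z_1\mapsto f(z_1,w)$ in a small disc, prove that its coefficients depend holomorphically on $w$, and then set $h=f/g$. First I would reduce to the one-variable picture. Since $f(0)=0$ and $f_0(z_1):=f(z_1,0,\dots,0)\not\equiv 0$, there is an integer $d\ge 1$ and a radius $\rho>0$ with $f_0(z_1)=z_1^d u(z_1)$, $u(0)\neq 0$, and $f_0$ zero-free on $0<|z_1|\le\rho$; in particular $f_0$ has no zero on $|z_1|=\rho$. By continuity of $f$ there is a polydisc $P'\subset\mathbb{C}^{n-1}$ in the variables $w=(z_2,\dots,z_n)$ with $f(z_1,w)\neq 0$ for all $|z_1|=\rho$ and $w\in P'$. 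The argument principle then shows that
\[
N(w):=\frac{1}{2\pi i}\oint_{|z_1|=\rho}\frac{\partial_{z_1}f(z_1,w)}{f(z_1,w)}\,dz_1
\]
is a continuous $\mathbb{Z}$-valued function of $w\in P'$, hence constant; since $N(0)=d$, for each $w$ the map $z_1\mapsto f(z_1,w)$ has exactly $d$ zeros $a_1(w),\dots,a_d(w)$ in $|z_1|<\rho$, counted with multiplicity.

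Next I would assemble $g$ from symmetric functions of these roots. The roots $a_j(w)$ are in general only continuous and multivalued in $w$, but their power sums
\[
p_k(w):=\sum_{j=1}^d a_j(w)^k=\frac{1}{2\pi i}\oint_{|z_1|=\rho} z_1^k\,\frac{\partial_{z_1}f(z_1,w)}{f(z_1,w)}\,dz_1
\]
are holomorphic on $P'$ by differentiation under the integral sign. By Newton's identities the elementary symmetric functions $\sigma_1(w),\dots,\sigma_d(w)$ of the $a_j(w)$ are polynomials in $p_1(w),\dots,p_d(w)$, hence holomorphic on $P'$, and $\sigma_k(0)=0$ since all roots of $f_0$ sit at the origin. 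Then
\[
g(z_1,w):=\prod_{j=1}^d\bigl(z_1-a_j(w)\bigr)=z_1^d-\sigma_1(w)z_1^{d-1}+\cdots+(-1)^d\sigma_d(w)
\]
is a Weierstrass polynomial.

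To produce $h$, I would define it directly by the Cauchy integral
\[
h(z_1,w):=\frac{1}{2\pi i}\oint_{|\zeta|=\rho}\frac{f(\zeta,w)/g(\zeta,w)}{\zeta-z_1}\,d\zeta,\qquad |z_1|<\rho,\ w\in P',
\]
whose integrand is jointly holomorphic in $(z_1,w)$ because $g(\zeta,w)\neq 0$ on $|\zeta|=\rho$; hence $h$ is holomorphic on $\{|z_1|<\rho\}\times P'$. For fixed $w$, $\zeta\mapsto f(\zeta,w)/g(\zeta,w)$ is holomorphic on $|\zeta|\le\rho$ because the zeros of $g(\cdot,w)$ cancel exactly those of $f(\cdot,w)$, so the Cauchy formula gives $h(z_1,w)=f(z_1,w)/g(z_1,w)$ wherever $g(z_1,w)\neq 0$, and therefore $f=g\cdot h$ identically by continuity. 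Since $g(z_1,0)=z_1^d$ we get $h(z_1,0)=u(z_1)$, so $h(0)=u(0)\neq 0$; shrinking to a polydisc $P_{\epsilon'}(0)\subset\{|z_1|<\rho\}\times P'$ completes the existence part. For uniqueness, suppose $f=g_1 h_1=g_2 h_2$ with each $g_i$ a Weierstrass polynomial and $h_i(0)\neq 0$; after shrinking so that $h_1,h_2$ are zero-free, for each fixed $w$ the polynomials $g_1(\cdot,w)$ and $g_2(\cdot,w)$ are monic of the same degree with the same zeros (those of $f(\cdot,w)$, with multiplicity), hence equal, so $g_1=g_2$ and then $h_1=h_2$.

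The main obstacle, and the only genuinely substantive point, is establishing that the coefficients of $g$ depend holomorphically on $w$: the individual roots $a_j(w)$ need not be holomorphic since they may branch as $w$ varies, so one cannot differentiate them directly. The device that resolves this is to express the \emph{power sums} of the roots as the contour integrals $\frac{1}{2\pi i}\oint z_1^k\,\partial_{z_1}f/f\,dz_1$, which are manifestly holomorphic, and then pass to the elementary symmetric functions through Newton's identities. Once this is in hand, the remaining ingredients — the argument principle, differentiation under the integral, the Cauchy representation of $h$, and the uniqueness argument — are routine.
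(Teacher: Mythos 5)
Your proof is correct and is essentially the same argument-principle proof given in the cited reference (Huybrechts, Proposition 1.1.6), which the paper itself quotes without proof: counting zeros by the logarithmic residue, recovering the coefficients of $g$ from the holomorphic power sums via Newton's identities, and defining $h$ by the Cauchy integral. The only point left implicit is that in the uniqueness step the common degree $d$ of $g_1,g_2$ follows from the order of vanishing of $f_0$ at $0$, and that after shrinking the polydisc all roots of $g_i(\cdot,w)$ lie in the region where $h_i$ is zero-free; both are routine.
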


\begin{cor}\label{noisozero}
A holomorphic function $f:U\to \mathbb{C}$ on an open set $U\subset \mathbb{C}^{n}$ does not have an isolated zero.
\end{cor}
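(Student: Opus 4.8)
The plan is to argue by contradiction using the Weierstrass preparation theorem (Theorem \ref{WPT}). Suppose $f : U \to \mathbb{C}$ is holomorphic and has an isolated zero at some point $p \in U$. After translating we may assume $p = 0$, and after a linear change of coordinates we may arrange that the restriction $f_0(z_1) := f(z_1, 0, \dots, 0)$ does not vanish identically near $0$ (this is possible because $f \not\equiv 0$ near $0$, since $0$ is an isolated zero and not an interior point of the zero set; if $f$ vanished on the whole $z_1$-axis we rotate coordinates so that some complex line through $0$ on which $f$ is not identically zero becomes the $z_1$-axis). Then Theorem \ref{WPT} gives, on a small polydisc $P_{\epsilon'}(0)$, a factorization $f = g \cdot h$ with $h(0) \neq 0$ and $g(z_1,w) = z_1^d + \alpha_1(w) z_1^{d-1} + \cdots + \alpha_d(w)$ a Weierstrass polynomial, where the $\alpha_i$ are holomorphic on a ball in $\mathbb{C}^{n-1}$ with $\alpha_i(0) = 0$. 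Note $d \geq 1$, since $f(0) = 0$ forces $g(0) = 0$.

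Next I would shrink the polydisc so that $h$ is nowhere zero on it; then the zero set of $f$ coincides with the zero set of the Weierstrass polynomial $g$. The key point is now elementary: for each fixed $w$ in the $(n-1)$-dimensional ball, $g(\cdot, w)$ is a monic polynomial in $z_1$ of degree $d \geq 1$, so by the fundamental theorem of algebra it has a root $z_1(w) \in \mathbb{C}$, and (shrinking once more, using continuity of roots or the bound $|z_1(w)| \le 1 + \sum_i |\alpha_i(w)|$ together with $\alpha_i(0)=0$) this root lies in the $z_1$-disc of the polydisc. Hence the zero set of $g$ — and therefore of $f$ — surjects onto the $(n-1)$-dimensional polydisc in the $w$-variables via the projection $(z_1,w) \mapsto w$. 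In particular the zero set of $f$ near $0$ is infinite (indeed of positive dimension), contradicting the assumption that $0$ is an isolated zero. This contradiction proves the corollary.

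The only delicate step is the reduction to the hypothesis $f_0(z_1) \not\equiv 0$ needed to apply Theorem \ref{WPT}: one must know that, after a suitable complex-linear coordinate change, $f$ is not identically zero along the $z_1$-axis. This follows because $0$ being an \emph{isolated} zero means $f \not\equiv 0$ on any neighborhood of $0$, so its zero locus is a proper analytic subset; a generic complex line through $0$ is not contained in it, and we take that line to be the new $z_1$-axis. I expect this normalization — and making precise that "generic line works" — to be the main (though still routine) obstacle; everything after the application of Theorem \ref{WPT} is just the fundamental theorem of algebra applied fiberwise. As a remark, the same argument shows more: the zero set of a holomorphic function of $n$ variables, if nonempty, always has codimension exactly one, which is the fact implicitly used when invoking Corollary \ref{noisozero} in the sequel.
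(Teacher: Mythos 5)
Your proof is correct and follows exactly the route the paper intends: the statement is presented as a corollary of the Weierstrass preparation theorem, and your fiberwise application of the fundamental theorem of algebra to the Weierstrass polynomial is the standard argument. The one normalization you flag as delicate is in fact automatic here: since $0$ is an isolated zero, $f$ restricted to \emph{any} complex line through $0$ vanishes only at $0$ in a small ball, so $f_0(z_1)\not\equiv 0$ without any coordinate rotation.
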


\section{Proofs of the main results}\label{mainres}

We need some preparations first. The following lemma about the uniqueness of the extension is needed:

\begin{lemma}\label{uniextend}
Let $K\subset B^{n}=\{(z_1,\cdots, z_n) | \sum_{i=1}^{n}|z_i|^2 < 1\}$ such that $B^{n}\setminus K$ is a connected open subset of $B^{n}$. $B^{n}\setminus K$ is endowed with a K\"{a}hler metric $g$ of constant holomorphic sectional curvature $c$. If there exists an extended K\"{a}hler metric  $\widetilde{g}$ on $B^{n}$, whose holomorphic sectional curvature is $c$, then $\widetilde{g}$ is unique.
\end{lemma}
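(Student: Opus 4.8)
The plan is to deduce this from the essential uniqueness of developing maps (Proposition~\ref{Uniqdevmap}) together with the identity theorem for holomorphic maps.

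Suppose $\widetilde g_1,\widetilde g_2$ are two K\"ahler metrics on $B^n$, both of constant holomorphic sectional curvature $c$, with $\widetilde g_i|_{B^n\setminus K}=g$ for $i=1,2$. First I would apply Theorem~\ref{devexist} to $(B^n,\widetilde g_i)$: since $B^n$ is simply connected, its universal cover is itself, so we obtain \emph{single-valued} holomorphic developing maps $\widetilde f_i:B^n\to N_c$ with $\widetilde f_i^{\,*}g_{N_c}=\widetilde g_i$, and these are holomorphic local isometries on all of $B^n$. Next I would observe that the restriction $\widetilde f_i|_{B^n\setminus K}$ is again a developing map of $(B^n\setminus K,g)$: for small $U\subset B^n\setminus K$ the germ $(U,\widetilde f_i|_U)$ is a holomorphic isometry onto an open subset of $N_c$, hence a chart of the underlying $(G,N_c)$-structure, and any two such germs are $G$-related on overlaps by Theorem~\ref{KahlerCAH}; since $\widetilde f_i$ is globally defined, analytic continuation of such a germ along any path needs no readjustment, so the developing map assembled from these charts is exactly $\widetilde f_i|_{B^n\setminus K}$ precomposed with the universal covering of $B^n\setminus K$ (this uses the connectedness of $B^n\setminus K$).

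Applying Proposition~\ref{Uniqdevmap} on $B^n\setminus K$ then yields $\Phi\in G$ with $\widetilde f_1=\Phi\circ\widetilde f_2$ on $B^n\setminus K$. Since $K$ is compact and $B^n$ is not, $K\subsetneq B^n$, so $B^n\setminus K$ is a \emph{nonempty open} subset of the connected complex manifold $B^n$; hence the two holomorphic maps $\widetilde f_1$ and $\Phi\circ\widetilde f_2$, agreeing there, agree on all of $B^n$ by the identity theorem. As $\Phi$ is an isometry of $N_c$,
\[
\widetilde g_1=\widetilde f_1^{\,*}g_{N_c}=\widetilde f_2^{\,*}\Phi^{*}g_{N_c}=\widetilde f_2^{\,*}g_{N_c}=\widetilde g_2
\]
on $B^n$, which gives the uniqueness.

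I do not expect a genuine obstacle here; the content lies entirely in the developing-map machinery of Section~\ref{preliminaries}. Two points need care: (i) that the restriction of a developing map to the connected open set $B^n\setminus K$ is again a developing map, so that Proposition~\ref{Uniqdevmap} applies; and (ii) that it is precisely the hypothesis $\widetilde g_1|_{B^n\setminus K}=\widetilde g_2|_{B^n\setminus K}$ which forces the transition $\widetilde f_1\circ(\widetilde f_2|_U)^{-1}$ between small patches to be a \emph{local isometry} of $N_c$ rather than merely a biholomorphism, and this is what allows it to extend to a global element of $G$. As an alternative one could avoid Proposition~\ref{Uniqdevmap} altogether: the identities $\widetilde g_i=\widetilde f_i^{\,*}g_{N_c}$ with $\widetilde f_i$ holomorphic show each $\widetilde g_i$ is real analytic on $B^n$, and two real-analytic tensors on the connected real-analytic manifold $B^n$ that agree on the nonempty open set $B^n\setminus K$ must agree everywhere.
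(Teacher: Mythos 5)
Your proposal is correct and is essentially the paper's own argument: both restrict the single-valued developing maps of $\widetilde g_1,\widetilde g_2$ to $B^n\setminus K$, invoke Proposition~\ref{Uniqdevmap} to get $\widetilde f_1=\Phi\circ\widetilde f_2$ there for some $\Phi\in G$, and propagate this identity to all of $B^n$ before pulling back the metric. Your write-up is somewhat more explicit about why the restriction is again a developing map and about the identity-theorem step (and note that the lemma does not assume $K$ compact, though nonemptiness of $B^n\setminus K$ is already part of the hypothesis), but the route is the same.
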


\begin{proof}

For two metrics $\widetilde{g}_1$ and $\widetilde{g}_2$ that extend $g$ and whose curvatures are $c$, let $D_1, D_2:B^{n}\to N_c$ be the developing maps of $\widetilde{g}_1, \widetilde{g}_2$, respectively. Fix $x_0\in B^{n}\setminus K$ and take two germs $(U_0,\phi_0)$ and $(U_0,\psi_0)$, where $x_0\in U_0\subset B^{n}\setminus K$ , such that $D_1|_{U_0}=\phi_0$ and $D_2|_{U_0}=\psi_0$. Let $f_1$ and $f_2$ be the developing maps on $B^{n}\setminus K$ generated by $(U_0,\phi_0)$ and $(U_0,\psi_0)$, respectively. Then by Proposition \ref{Uniqdevmap}, we get $f_1=\tau\circ f_2$, where $\tau: N_c\to N_c$ is a holomorphic isometry.  In particular, $D_1|_{U_0}=\tau \circ D_2|_{U_0}$. This induces $D_1=\tau\circ D_2$. Therefore, $\widetilde{g}_1=\widetilde{g}_2$ since $\tau$ is a holomorphic isometry. This proves the lemma.

\end{proof}

The next theorem makes the elliptic case trivial.

\begin{theorem}(\citep[Theorem 1]{ivashkovich1984extension})\label{extensionCPn}
Let $M$ be a Stein manifold, $U\subset M$ a domain in $M$, and $\widetilde{U}$ the envelope of holomorphy of $U$ (\cite[Section 5.4]{Hormander}). If $f:U\to \mathbf{C}P^{n}$ is a locally biholomorphic mapping, then there exists a locally biholomorphic mapping $F:\widetilde{U}\to \mathbf{C}P^{n}$ extending $f$.
\end{theorem}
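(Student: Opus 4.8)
The plan is to build the extension $F$ by following how the envelope of holomorphy is filled out through the continuity principle (\emph{Kontinuit\"atssatz}): $\widetilde U$ is reached from $U$ by adjoining points that get ``surrounded'' by families of holomorphic disks with boundaries staying inside $U$, and I would extend $f$ across each such new point. The property of the target that makes this work — and that fails for a general compact complex manifold — is that $\mathbf{C}P^{n}$ is compact and K\"ahler, so images of disks under $f$ obey uniform area bounds.

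Concretely, let $p\in\widetilde U$ be reached as $\lim_{t\to 0}\varphi_{t}(0)$ for a family of holomorphic disks $\varphi_{t}\colon\overline{\Delta}\to M$ with $\varphi_{t}(\partial\Delta)\subset U$ for all $t$ and $\varphi_{t}(\Delta)\subset U$ for $t\ne 0$. For $t\ne 0$ the maps $f\circ\varphi_{t}\colon\overline{\Delta}\to\mathbf{C}P^{n}$ are holomorphic, and since the Fubini--Study form $\omega$ is closed, Stokes shows $\int_{\Delta}(f\circ\varphi_{t})^{*}\omega$ depends only on $(f\circ\varphi_{t})|_{\partial\Delta}$, which ranges in a fixed compact part of $U$; hence these areas are uniformly bounded. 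Compactness of $\mathbf{C}P^{n}$ and a normal-families argument then yield a holomorphic limit disk $f\circ\varphi_{0}\colon\overline{\Delta}\to\mathbf{C}P^{n}$, and one sets $F(p):=(f\circ\varphi_{0})(0)$. Next one checks $F(p)$ is independent of the chosen family: the set of points of $\widetilde U$ at which a (necessarily unique) local holomorphic extension of $f$ exists is open, contains $U$, and by the convergence just described is closed, hence equals $\widetilde U$; this produces a holomorphic $F\colon\widetilde U\to\mathbf{C}P^{n}$ restricting to $f$, with no indeterminacy to resolve. That $F$ is locally biholomorphic follows in the same spirit: the locus where $dF$ is non-invertible is analytic and misses the dense set $U$, and a disk argument using the immersivity of $f$ (not just holomorphy) shows it is empty.

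The step I expect to be the main obstacle is this convergence-and-independence argument — extracting a genuine holomorphic limit disk and showing $F(p)$ does not depend on the family — which is exactly where compactness and the K\"ahler structure of $\mathbf{C}P^{n}$ are indispensable, and where one must also prevent a drop of rank in the limit. In the simply-connected situations relevant to this paper there is a shorter route: with $U=B^{n}\setminus K$ (or $B^{n}\setminus\{z_{1}=z_{2}=0\}$) and $\widetilde U=B^{n}$ by the Hartogs theorem, $\mathrm{Pic}(U)$ is torsion-free, and since $f$ is locally biholomorphic $f^{*}\mathcal{O}_{\mathbf{C}P^{n}}(1)^{\otimes(n+1)}\cong K_{U}^{-1}$ is trivial, hence so is $f^{*}\mathcal{O}_{\mathbf{C}P^{n}}(1)$; thus $f$ lifts to holomorphic homogeneous coordinates $\hat f=(\hat f_{0},\dots,\hat f_{n})\colon U\to\mathbf{C}^{n+1}\setminus\{0\}$. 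Each $\hat f_{j}$ extends to $B^{n}$ by Theorem \ref{HartogK}, giving $\widetilde{\hat f}\colon B^{n}\to\mathbf{C}^{n+1}$, and the holomorphic function $W:=\det[\,\widetilde{\hat f}\mid\partial_{1}\widetilde{\hat f}\mid\cdots\mid\partial_{n}\widetilde{\hat f}\,]$ is nonvanishing on $U$ — precisely the immersivity of $f=[\widetilde{\hat f}\,]$ — so its zero set is an analytic hypersurface of $B^{n}$ inside $B^{n}\setminus U$, which, being compact or of codimension $\ge 2$, must be empty (cf.\ Corollary \ref{noisozero}). Hence $\widetilde{\hat f}$ never vanishes and $F:=[\widetilde{\hat f}\,]\colon B^{n}\to\mathbf{C}P^{n}$ is the desired locally biholomorphic extension.
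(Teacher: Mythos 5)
The paper does not actually prove this statement: it is quoted as an external result of Ivashkovich, so there is no internal proof to compare against. Judged on its own terms, your proposal splits into two arguments of very different status. The first (continuity-principle) argument is only a sketch of the general theorem, and the gap you yourself flag is genuine and is the whole difficulty: for area-bounded holomorphic disks into a compact K\"ahler target the limit can bubble, so the ``closedness'' of the set of points where a local extension exists is not established, $F(p)$ may a priori be only meromorphically defined, and nothing in the argument prevents the rank of the limit from dropping; one also needs a justification that every point of $\widetilde U$ is reached from $U$ by such disk families. As written, this half does not prove the theorem.

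Your second argument, by contrast, is a correct and essentially complete proof of the two special cases the paper actually needs ($U=B^{n}\setminus\overline{B_r^{n}}$ and $U=B^{n}\setminus\{z_1=z_2=0\}$, with $\widetilde U=B^{n}$), and it is a genuinely different, more elementary route than invoking Ivashkovich. Two points should be made explicit. First, torsion-freeness of $\operatorname{Pic}(U)$: for both domains $U$ is homotopy equivalent to a sphere ($S^{2n-1}$, resp.\ $S^{3}$) with $H^{1}(U,\mathbb{Z})=H^{2}(U,\mathbb{Z})=0$ for $n\ge 2$, so the exponential sequence gives $\operatorname{Pic}(U)\cong H^{1}(U,\mathcal{O})$, a complex vector space; combined with $f^{*}\mathcal{O}(1)^{\otimes(n+1)}\cong K_U^{-1}\cong\mathcal{O}_U$ this yields the lift $\hat f$. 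Second, the nonvanishing of $W=\det[\,\widetilde{\hat f}\mid\partial_{1}\widetilde{\hat f}\mid\cdots\mid\partial_{n}\widetilde{\hat f}\,]$ is exactly the immersivity criterion for $[\hat f]$, and the correct reference for killing its zero set is Lemma \ref{zerononcpt} in the compact case (a hypersurface cannot be a compact subset of $B^n$) and Lemma \ref{z1z2zero} in the codimension-two case, not Corollary \ref{noisozero}. With those repairs the second argument stands on its own and would let the paper avoid the citation entirely; what it does not give is the general statement for an arbitrary domain in a Stein manifold.
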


To begin the proof of Theorem \ref{Kcpt}, we prove a useful lemma that will be used to prove the nondegeneration of the extended developing map in the hyperbolic and flat cases:

\begin{lemma}\label{zerononcpt}
Let $\Omega$ be a domain of $\mathbb{C}^{n}(n>1)$, and $f\not\equiv 0$ is a holomorphic function with $f(z_0)= 0$ for some $z_0\in \Omega$. Suppose that $Z:=\{z\in \Omega|f(z)=0\}$ satisfies $\Omega\setminus Z$ is connected. Then $Z$  is not a compact subset of $\Omega$.
\end{lemma}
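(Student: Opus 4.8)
The plan is to argue by contradiction and reduce everything to Hartogs' extension theorem (Theorem \ref{HartogK}). Assume, contrary to the conclusion, that $Z$ is a compact subset of $\Omega$. First observe that $Z$ is a proper subset of $\Omega$: since $f\not\equiv 0$, the identity theorem forces $Z$ to have empty interior, so in particular $\Omega\setminus Z$ is a nonempty (indeed dense) open subset of $\Omega$; and $Z\neq\emptyset$ because $f(z_0)=0$. By hypothesis $\Omega\setminus Z$ is connected, so the pair $(\Omega,Z)$ is exactly of the type to which Theorem \ref{HartogK} applies.

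Next I would consider the function $1/f$, which is holomorphic on $\Omega\setminus Z$. Applying Theorem \ref{HartogK} with $U=\Omega$ and $K=Z$, we obtain a holomorphic function $g$ on all of $\Omega$ with $g=1/f$ on $\Omega\setminus Z$. Then $f\cdot g\equiv 1$ on $\Omega\setminus Z$. Since $\Omega\setminus Z$ is a nonempty open subset of the connected open set $\Omega$, the identity theorem gives $f\cdot g\equiv 1$ on all of $\Omega$. But this means $f$ is nowhere vanishing on $\Omega$, contradicting $f(z_0)=0$. Hence $Z$ cannot be compact, which is the assertion of the lemma.

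I do not expect a genuine obstacle here; the only points needing care are the bookkeeping checks that the hypotheses of Theorem \ref{HartogK} are met (compactness of $Z$ is the contradiction hypothesis, connectedness of the complement is given, and the complement is nonempty because a nontrivial zero set of a holomorphic function has empty interior). It is worth remarking that the argument genuinely uses $n>1$: in one complex variable Hartogs' phenomenon fails, and indeed a nonconstant holomorphic function on a disc can have its (finite, hence compact) zero set sitting inside the disc, so the statement is false for $n=1$.
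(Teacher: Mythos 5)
Your proof is correct and follows essentially the same route as the paper's: assume $Z$ compact, extend $1/f$ across $Z$ by Hartogs' theorem, and derive a contradiction from $f\cdot g\equiv 1$ versus $f(z_0)=0$. The only cosmetic difference is that you invoke the identity theorem to propagate $f\cdot g\equiv 1$ while the paper appeals to uniqueness of the Hartogs extension; these are the same observation.
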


\begin{proof}
Suppose otherwise $Z$ is compact.  Since $\Omega\setminus Z$ is connected, by Hartogs' Theorem (Theorem \ref{HartogK}), the holomorphic function $1/{f}:\Omega\setminus Z\to \mathbb{C}$ can be uniquely holomorphically extended to $\Omega$. Denote the extended function by $g$. This contradicts the fact that $f(z_0)=0$, because $h\equiv 1$ and $f\cdot g$ are both extensions of $f\cdot \frac{1}{f}$, but $f(z_0)\cdot g(z_0)=0\not=1 $, contradicting the uniqueness of the extension.
\end{proof}

We now turn to the proofs of the main theorems:


\begin{proof}{(Theorem \ref{Kcpt})}
Denote the ball of radius $r$ by $B_{r}^{n}:=\{(z_1,\cdots, z_n)\|\sum_{i=1}^{n}|z_i|^2 < r\}(n\ge 2)$. We can find an $r\in (0,1)$ such that  $K\subset B_r^{n}$ since $K$ is compact. Set $M:=B^{n}\setminus \overline{B_{r}^{n}}$. We first prove that $g|_{M}$ can extend to $B^n$.
 
 We claim that the K\" ahler manifold $M$ has a single-valued developing map $f_0: M \to N_c$. Actually, by Proposition \ref{LocalKahlerCaH}, for an arbitrary point $x_0$ in $M$, we could choose a germ $(U_0, \phi_0)$ for $(M,\,g|_M)$ such that  $x_0\in U_0\subset M$. Since $M$ is simple connected, doing analytical continuation of $(U_0,\phi_0)$ yields a single-valued developing map $f_0$ of $M$.

In the elliptic case, by applying Theorem \ref{extensionCPn} to $f_0$ one gets an extension $F:B^{n}\to \mathbf{C}P^{n}$. Since $F$ is locally biholomorphic, $\widetilde{g}:=F^{*}g_{FS}$ gives the extension of the metric we desire, where $g_{FS}$ is the Fubini-Study metric.

We now prove the hyperbolic case. Note that the same argument also works for the flat case without any difficulty. By Theorem \ref{HartogK}, we can extend $f_0$ to $F:B^{n}\to \mathbb{C}^{n}$. Consider the function $h:B^{n}\to \mathbb{C}$ defined by $h(z)=\sum_{i=1}^{n}|F_i|^{2}$. We see that
 $h$ is subharmonic by the following simple computation
 $$ \frac{\partial^2 |F_i|^2}{\partial z_j \partial \bar{z_j}}=\frac{\partial^2 \big(F_i\,\overline{F_i}\big)}{\partial z_j \partial \bar{z_j}}=\left|\frac{\partial F_i}{\partial z_j}\right|^2\geq 0.  $$
  Choose $r_0$ such that $r<r_0<1$. By the maximum principle, one can see for every $x\in \overline{B_r^{n}}$, $h(x)\le \sup_{y\in B_{r_0}^{n}}h(y)< 1$. This implies that the extended $F$ has its image in $\mathbb{B}^{n}$. Next, We claim that $dF$ is nondegenerate everywhere. In fact, denote the zero set of $\det dF$ by $Z$. Suppose $Z\not=\emptyset$. Since $Z\subset \overline{B_r^{n}}$ is closed  and bounded, $Z$ is compact. However, by Lemma \ref{zerononcpt}, $Z$ cannot be compact, a contradiction. Therefore, $Z=\emptyset$, i.e. $dF$ is non-degenerate everywhere on $B^{n}$. In conclusion, $\widetilde{g}= F^{*}g_{\mathbb{B}^{n}}$ defines a metric on $B^{n}$ that extends $g|_{M}$, with the curvature $-1$.

Moreover, the uniqueness of extension is guaranteed by Lemma \ref{uniextend}. Lemma \ref{uniextend} also shows that $g=\widetilde{g}$ on $B^{n}\setminus K$.


\end{proof}


Now we prove the other result. We begin with the following lemma:

\begin{lemma}\label{z1z2zero}
Let $h: B^{n}\setminus \{z|z_1=z_2=0\}\to \mathbb{C}$ be holomorphic. If $h$ nowhere vanishes, then so is its analytic continuation $H: B^{n}\to \mathbb{C}$.
\end{lemma}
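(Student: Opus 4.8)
The plan is to argue by contradiction, exploiting the observation that the only points of $B^n$ at which the extension $H$ could conceivably vanish are the points of the excised locus $\{z_1=z_2=0\}$, and then to kill that possibility by restricting $H$ to a two-dimensional slice transverse to this locus and applying Corollary \ref{noisozero}.

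First I would invoke Theorem \ref{Hartogz1z2} to obtain the holomorphic extension $H\colon B^n\to\mathbb{C}$ with $H|_{B^n\setminus\{z_1=z_2=0\}}=h$; since $h$ is nowhere zero we have in particular $H\not\equiv 0$. Suppose, for contradiction, that $H(z_0)=0$ for some $z_0\in B^n$. As $H$ agrees off $\{z_1=z_2=0\}$ with the nowhere-vanishing $h$, the point $z_0$ must lie on that locus, say $z_0=(0,0,a_3,\dots,a_n)$ with $|a_3|^2+\cdots+|a_n|^2<1$.

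Next I would localize. Choose $\varepsilon>0$ small enough that the bidisc slice $S:=\{(z_1,z_2,a_3,\dots,a_n):|z_1|,|z_2|<\varepsilon\}$ is contained in $B^n$, and regard $\widetilde{H}(z_1,z_2):=H(z_1,z_2,a_3,\dots,a_n)$ as a holomorphic function of two variables on the bidisc $\{|z_1|,|z_2|<\varepsilon\}\subset\mathbb{C}^2$. For $(z_1,z_2)\neq(0,0)$ the point $(z_1,z_2,a_3,\dots,a_n)$ lies in $B^n\setminus\{z_1=z_2=0\}$, so $\widetilde{H}(z_1,z_2)=h(z_1,z_2,a_3,\dots,a_n)\neq 0$; on the other hand $\widetilde{H}(0,0)=H(z_0)=0$. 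Hence $\widetilde{H}\not\equiv 0$ yet has an isolated zero at the origin, contradicting Corollary \ref{noisozero}. Therefore $H$ vanishes nowhere on $B^n$.

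I do not expect a genuine obstacle here: once the transverse bidisc slice is set up, the conclusion is a one-line appeal to Corollary \ref{noisozero}. The only point that requires care is conceptual rather than technical — the \emph{a priori} danger is precisely that $H$ might vanish only on the codimension-two set that was removed, and the choice of a slice hitting that set transversally (in the single point $z_0$) is exactly what turns such a hypothetical zero into a forbidden isolated zero. One could equivalently phrase the contradiction via the fact that the zero set of a non-identically-zero holomorphic function has pure codimension one and so cannot be contained in $\{z_1=z_2=0\}$, but the slicing argument keeps everything inside the results already available in the excerpt.
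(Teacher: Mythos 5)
Your proof is correct and takes essentially the same route as the paper: both restrict $H$ to a two-dimensional slice $\{(z_1,z_2,a_3,\dots,a_n)\}$ through a point of the excised locus and apply Corollary \ref{noisozero} to rule out the resulting isolated zero. The only cosmetic difference is that you phrase it as a contradiction while the paper argues directly that $H(w)\neq 0$ at each such point.
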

\begin{proof}
For a fixed $w=(0,0,w_3,\cdots, w_n)\in B^{n}$, $h|_{V}$, where $V=\{(z_1,z_2,w_3,\cdots, w_n)\in B^{n}||z_1|^2+|z_2|^2\not=0\}$, can be viewed as a holomorphic map in two variables. In addition, $h$ nowhere vanishes on $V$ by the assumption.

Set $\widetilde{V}=\{(z_1,z_2,w_3,\cdots, w_n)|z_1,z_2\in \mathbb{C}\}\cap B^{n}$. $H|_{\widetilde{V}}$ is the extension of $h|_{V}$, which can also be viewed as a holomorphic map in two variables. Then by Corollary \ref{noisozero}, $H(w)\not= 0$, which completes the proof.
\end{proof}

Then we can prove Theorem \ref{z1z2=0}:

\begin{proof}{(Theorem \ref{z1z2=0})}
By the same argument as in the proof of Theorem \ref{Kcpt}, it is proved in the elliptic case.

We then prove the hyperbolic case. Consider the developing map $f:M\to \mathbb{B}^{n}$, where $\mathbb{B}^{n}$ is the Bergman ball with the metric $g_{\mathbb{B}^{n}}$. Since $M$ is simply connected, $f$ is a single-valued local isometry, so $df$ is nondegenerate at every point of $M$,i.e. $\det df\not =0$ at every point of $M$. In addition, $f$ is holomorphic, so by Hartogs' Theorem (Theorem \ref{Hartogz1z2}), it extends to a holomorphic map on $B^{n}$, which is still denoted by $f: B^{n}\to \mathbb{C}^{n}$. Note that im$f$ is contained in $\overline{B^{n}}$. Applying Lemma \ref{z1z2zero} to $\det df$, one can find that $df$ is nondegenerate everywhere in $B^{n}$. Since $df$ is nondegenerate everywhere, $f$ is an open map. Therefore $f:B^{n}\to \overline{B^{n}}$ has its image in $\mathbb{B}^{n}$.

To conclude, $\widetilde{g}= f^{*}g_{\mathbb{B}^{n}}$ defines a metric on $B^{n}$ that extends $g$, with curvature $-1$. In addition, the uniqueness is guaranteed by Lemma \ref{uniextend}.

The same argument also works for the flat case without any difficulty.
\end{proof}

\section{Some Prospects}\label{pro}

 We could get more results about removable singularities for K\"{a}hler metrics with constant holomorphic sectional curvature by using other versions of Hartogs' extension theorem. More precisely, let $U\subset \mathbb{C}^{n}(n>1)$ be a domain, $K\subset U$. $f:U\setminus K\to \mathbb{C}$ is  holomorphic. We have seen when $K$ is compact or $K=\{z_1=z_2=0\}$, $f$ extends to $U$. We wonder whether any other conditions can be imposed on $K$ such that $f$ can extend to $U$. Referring to the proofs of Theorems \ref{Kcpt} and \ref{z1z2=0}, it is hopeful to get other forms of removable singularities.

In addition, instead of a manifold of constant sectional curvature (resp. constant holomorphic sectional curvature), we also expect the existence of the developing map on a locally Riemannian (resp. Hermitian) symmetric space. This may give us the consequences similar to the case of constant sectional curvature(resp. constant holomorphic sectional curvature).

Moreover, recall that in \cite{FengXu}, an investigation into the monodromy of the developing map gives the explicit classified moduli of conformal hyperbolic
metrics near isolated singularities, and Theorem \ref{Kcpt} and \ref{z1z2=0} shows us the removability of singularities of codimension $0$ or larger than $1$, so in the next step, we naturally plan to use the theory of developing maps to study the codimension-one singularities of hyperbolic metrics in higher dimension:
\\
\\
\noindent{\bf Problem:} What is the asymptoic behavior of a K\"{a}hler metric with constant negative holomorphic sectional curvature in $B^{n}\setminus\{z_1z_2\cdots z_n=0\}$? For example, $-\sqrt{-1}\partial \overline{\partial}\Big(1-\sum_{j=1}^n\,|z_j|^{2\beta_j}\Big)$ with $0<\beta_j\not=1$
 is such a K\" ahler form (metric) in  $B^{n}\setminus\{z_1z_2\cdots z_n=0\}$  with cone singularities along the divisor $\{z_1z_2\cdots z_n=0\}$.
\\
\\
\indent Finally, we elaborate on the Remark \ref{Riemisosing}. Denote the real unit ball by $\mathfrak{B}^{n}$. For the given sectional curvature $c=-1,0$ or $1$, we want to construct the single-valued developing maps $f_{-1}$, $f_{0}$ and $f_{1}$ on $\mathfrak{B}^{n}\setminus \{0\}$ with image in $\mathfrak{B}^{n}$, ${\Bbb R}^{n}$ and $S^{n}$ (see Theorem \ref{Riemdev}) respectively such that their Jacobians are nondegenerate everywhere but cannot analytically extend to $\mathfrak{B}^{n}$. It suffices to give the expression of $f_{-1}$ solely since it induces the other two naturally.  We find $f_{-1}:\mathfrak{B}^{n}\setminus \{0\}\to \mathfrak{B}^{n}, x\mapsto \frac{x}{2}+\frac{||x||}{4}e_1$,where $e_1=(1,0,\cdots,0)$, meets the requirements. Therefore, if we set $(M_c, g_c)$ $(c=-1,0,1)$ as the space form, $f_{c}^{*}g_c$ defines a Riemannian metric on $B^{n}\setminus \{0\}$ that has an isolated singularity at $0$. Also, it is natural to ask whether we can find a way to classify the singularities.

\section*{Acknowledgments}
The authors would like to express his sincere gratitude to Professor Song Sun, Professor Qiongling Li and Dr. Martin de Borbon for their valuable comments and advice. The authors thank Dr. Jingchen Hu very much for his valuable comments and for showing them the reference \cite{ivashkovich1984extension}, which helps them solve the elliptic case of Theorems \ref{Kcpt} and \ref{z1z2=0}. Also, the authors thank Professor Pietro Majer for telling them on MathOverFlow the examples of non-degenerate real analytic maps that cannot extend to the origin (see \cite{PMajerhelp}).  B.X. is supported in part by the National Natural Science Foundation of China (Grant nos. 11571330 and 11971450) and the Fundamental Research Funds for the Central Universities. Part of the work was completed while B.X. was visiting Institute of Mathematical Sciences at ShanghaiTech University in Spring 2019.

\bibliographystyle{amsalpha}
\bibliography{references2}

\providecommand{\bysame}{\leavevmode\hbox to3em{\hrulefill}\thinspace}
\providecommand{\MR}{\relax\ifhmode\unskip\space\fi MR }
\providecommand{\MRhref}[2]{%
  \href{http://www.ams.org/mathscinet-getitem?mr=#1}{#2}
}
\providecommand{\href}[2]{#2}
\begin{thebibliography}{CWWX15}

\bibitem[Bry87]{Bryant}
Robert Bryant, \emph{Surfaces of mean curvature one in hyperbolic space},
  Ast{\'e}risque \textbf{154} (1987), 321--347.

\bibitem[CE08]{Cheeger}
Jeff Cheeger and David~G Ebin, \emph{Comparison theorems in {R}iemannian
  geometry}, vol. 365, American Mathematical Soc., 2008.

\bibitem[CWWX15]{CWWX}
Qing Chen, Wei Wang, Yingyi Wu, and Bin Xu, \emph{Conformal metrics with
  constant curvature one and finitely many conical singularities on compact
  {R}iemann surfaces}, Pacific Journal of Mathematics \textbf{273} (2015),
  no.~1, 75--100.

\bibitem[Don12]{Donaldson}
Simon~K Donaldson, \emph{{K}{\"a}hler metrics with cone singularities along a
  divisor}, Essays in mathematics and its applications, Springer, 2012,
  pp.~49--79.

\bibitem[Ere04]{eremenko2004metrics}
Alexandre Eremenko, \emph{Metrics of positive curvature with conic
  singularities on the sphere}, Proceedings of the American Mathematical
  Society \textbf{132} (2004), no.~11, 3349--3355.

\bibitem[FSX19]{FengXu}
Yu~Feng, Yiqian Shi, and Bin Xu, \emph{Isolated singularities of conformal
  hyperbolic metrics}, Chinese J. Contemp. Math. \textbf{40} (2019), no.~1,
  15--26.

\bibitem[Hei62]{Hein}
Maurice Heins, \emph{On a class of conformal metrics}, Nagoya Mathematical
  Journal \textbf{21} (1962), 1--60.

\bibitem[Hor73]{Hormander}
Lars Hormander, \emph{An introduction to complex analysis in several
  variables}, vol.~7, Elsevier, 1973.

\bibitem[Huy06]{Daniel}
Daniel Huybrechts, \emph{Complex geometry: an introduction}, Springer Science
  \& Business Media, 2006.

\bibitem[Iva84]{ivashkovich1984extension}
Sergei~Mikhailovich Ivashkovich, \emph{Extension of locally biholomorphic
  mappings of domains into complex projective space}, Mathematics of the
  USSR-Izvestiya \textbf{22} (1984), no.~1, 181--189.

\bibitem[KN69]{Kobayashi}
S~Kobayashi and K~Nomizu, \emph{Foundations of differential geometry}, vol.~2,
  Interscience, New York, 1969.

\bibitem[LS14]{LiSun}
Chi Li and Song Sun, \emph{Conical {K}{\"a}hler--{E}instein metrics revisited},
  Communications in Mathematical Physics \textbf{331} (2014), no.~3, 927--973.

\bibitem[Maj]{PMajerhelp}
Pietro Majer, \emph{Removability of the isolated singularity of real analytic
  mappings with nondegenerate jacobian}, MathOverFlow,
  \href{https://mathoverflow.net/questions/335547/removability--of--the--isolated--singularity--of--real--analytic--mappings--with--nondegen}{mathoverflow.net/questions/335547}.

\bibitem[McO88]{mcowen1988point}
Robert~C McOwen, \emph{Point singularities and conformal metrics on {R}iemann
  surfaces}, Proceedings of the American Mathematical Society \textbf{103}
  (1988), no.~1, 222--224.

\bibitem[Nit57]{nitsche1957isolierten}
Johannes Nitsche, \emph{{\"U}ber die isolierten singularit{\"a}ten der
  l{\"o}sungen von $\bigtriangleup u= e^{u}$}, Mathematische Zeitschrift
  \textbf{68} (1957), no.~1, 316--324.

\bibitem[Siu88]{siu1988existence}
Yum-Tong Siu, \emph{The existence of k\"{a}hler-einstein metrics on manifolds
  with positive anticanonical line bundle and a suitable finite symmetry
  group}, Annals of mathematics \textbf{127} (1988), 585--627.

\bibitem[SV06]{seshadri2006class}
Harish Seshadri and Kaushal Verma, \emph{A class of nonpositively curved
  {K}{\"a}hler manifolds biholomorphic to the unit ball in $\mathbb{C}^n$},
  Comptes Rendus Mathematique \textbf{342} (2006), no.~6, 427--430.

\bibitem[Thu14]{Thurston}
William~P Thurston, \emph{Three-dimensional geometry and topology}, vol.~1,
  Princeton university press, 2014.

\bibitem[Tro91]{Troyanov}
Marc Troyanov, \emph{Prescribing curvature on compact surfaces with conical
  singularities}, Transactions of the American Mathematical Society
  \textbf{324} (1991), no.~2, 793--821.

\bibitem[UY00]{umehara2000metrics}
Masaaki Umehara and Kotaro Yamada, \emph{Metrics of constant curvature 1 with
  three conical singularities on the $2 $-sphere}, Illinois Journal of
  Mathematics \textbf{44} (2000), no.~1, 72--94.

\bibitem[Voi03]{Voisin}
Claire Voisin, \emph{Hodge theory and complex algebraic geometry}, vol.~1,
  Cambridge University Press, 2003.

\bibitem[Yam88]{yamada1988bounded}
Akira Yamada, \emph{Bounded analytic functions and metrics of constant
  curvature on {R}iemann surfaces}, Kodai Mathematical Journal \textbf{11}
  (1988), no.~3, 317--324.

\bibitem[Zhe01]{FYZ}
Fangyang Zheng, \emph{Complex differential geometry}, no.~18, American
  Mathematical Soc., 2001.

\end{thebibliography}

\end{document}